\title{Examples of weakly amenable discrete quantum groups}
\author{Amaury Freslon}
\keywords{Approximation properties, quantum groups, weak amenability, Haagerup property}
\subjclass[2010]{46L09, 46L65}
\address{Univ. Paris Diderot, Sorbonne Paris Cité, UMR 7586, 8 place FM/13, 75013, Paris, France}
\email{freslon@math.jussieu.fr}
\date{\today}
\theoremstyle{plain}
\newtheorem{thm}{Theorem}[section]
\newtheorem{prop}[thm]{Proposition}
\newtheorem{cor}[thm]{Corollary}
\newtheorem{lem}[thm]{Lemma}
\theoremstyle{definition}
\newtheorem{de}[thm]{Definition}
\theoremstyle{remark}
\newtheorem{rem}[thm]{Remark}
\DeclareMathOperator{\Ad}{Ad}
\DeclareMathOperator{\Id}{Id}
\DeclareMathOperator{\Ir}{Irr}
\DeclareMathOperator{\Mor}{Mor}
\DeclareMathOperator{\Pol}{Pol}
\DeclareMathOperator{\spa}{\overline{span}}
\DeclareMathOperator{\Tr}{Tr}
\renewcommand{\H}{\mathcal{H}}
\renewcommand{\i}{\imath}
\newcommand{\B}{\mathcal{B}}
\newcommand{\C}{\mathbb{C}}
\newcommand{\D}{\Delta}
\newcommand{\F}{\mathbb{F}}
\newcommand{\G}{\mathbb{G}}
\newcommand{\KK}{\mathcal{K}}
\newcommand{\N}{\mathbb{N}}
\newcommand{\R}{\mathbb{R}}
\newcommand{\Z}{\mathbb{Z}}
\newcommand{\h}{\widehat}
\begin{document}

\begin{abstract}
We prove that the the free orthogonal and free unitary quantum groups $\F O_{N}^{+}$ and $\F U_{N}^{+}$ are weakly amenable and that their Cowling-Haagerup constant is equal to $1$. This is achieved by estimating the completely bounded norm of the projections on the coefficients of irreducible representations of their compact duals. An argument of monoidal equivalence then allows us to extend this result to quantum automorphism groups of finite spaces and even yields some examples of weakly amenable non-unimodular discrete quantum groups with the Haagerup property.
\end{abstract}

\maketitle

\section{Introduction}

The \emph{free orthogonal} and \emph{free unitary} quantum groups were constructed by A. Van Daele and S. Wang in \cite{van1996universal, wang1995free}. They are defined as universal C*-algebras generalizing the algebras of continuous functions on the classical orthogonal and unitary groups, together with some additional structure turning them into \emph{compact quantum groups}. From then on, these compact quantum groups have been studied from various points of view : probabilistic, geometric and operator algebraic. In particular, their reduced C*-algebras and von Neumann algebras form interesting classes of examples somehow in the same way as those arising from discrete groups. After the first works of T. Banica \cite{banica1996theorie, banica1997groupe}, it appeared that these operator algebras are closely linked to free group algebras. This link was made more clear by the results of S. Vaes and R. Vergnioux \cite{vaes2007boundary} on exactness and factoriality and those of M. Brannan \cite{brannan2011approximation} on the Haagerup property and the metric approximation property.

Weak amenability for locally compact groups was originally defined by M. Cowling and U. Haagerup in \cite{cowling1989completely} and studied in the context of real simple Lie groups by J. de Cannière, M. Cowling and U. Haagerup in \cite{cowling1989completely, de1985multipliers}. In the discrete setting, many examples were provided by N. Ozawa's result \cite{ozawa2007weak} stating that all Gromov hyperbolic groups are weakly amenable. Weak amenability has recently attracted a lot of attention since it is a key ingredient in some of S. Popa's deformation/rigidity techniques, see for example \cite{ozawa2010class, ozawa2010classbis}. Another feature of this approximation property is that it provides a numerical invariant which carries to the associated operator algebras and may thus give a way to distinguish them. An introduction to approximation properties for classical discrete groups can be found in \cite[Chapter 12]{brown2008finite}, though no knowledge on this subject will be required afterwards.

The results mentionned in the first paragraph naturally raise the issue of weak amenability for free quantum groups. It has been strongly suspected for some time that they have a Cowling-Haagerup constant equal to $1$, and this is what we prove in the present paper. To do this, we show that the completely bounded norm of the projections on coefficients of a fixed irreducible representation (i.e. on "words of fixed length") grows polynomially. This fact can then easily be combined with M. Brannan's proof of the Haagerup property to yield weak amenability when $F = \Id$, or more generally when $F$ is unitary. In the other (non-unimodular) cases, we are unable to give a complete answer to the problem, due to the lack of a proof of the Haagerup property.

Let us now briefly outline the organization of the paper. In Section \ref{sec:preliminaries}, we recall some basic facts about compact and discrete quantum groups and we fix notations. We also give some fundamental definitions and results concerning free quantum groups. We then introduce weak amenability for discrete quantum groups in Section \ref{sec:quantumwa}. Section \ref{subsec:blocks} contains the first technical part of our result, reducing the problem to controlling the norms of certain blocks of "operator-valued functions" on the discrete quantum groups considered. This can be interpreted as an analogue of the operator-valued Haagerup inequality for free groups. Another technical result is worked out in Section \ref{subsec:recursion} to obtain a suitable bound on the completely bounded norm of the projection on some fixed irreducible representation. Combining these results then easily yields our main result in Section \ref{subsec:final}. Using monoidal equivalence, we can then transfer our estimate to quantum automorphism groups of finite-dimensional C*-algebras and we can then extend our weak amenability result to this part of this class.

\subsection*{Acknowledgments}

We are deeply indebted to R. Vergnioux for the time he spent discussing the arguments of this paper and the knowledge on free quantum groups he was kind enough to share. We would also like to thank M. Brannan, M. de la Salle, P. Fima, P. Jolissaint and S. Vaes for discussions on topics linked to quantum groups and approximation properties at various stages of this project.

\section{Preliminaries}\label{sec:preliminaries}

\subsection{Notations}

All inner products will be taken linear in the second variable. For two Hilbert spaces $H$ and $K$, $\B(H, K)$ will denote the set of bounded linear maps from $H$ to $K$ and $\B(H):=\B(H, H)$. In the same way we will use the notations $\KK(H, K)$ and $\KK(H)$ for compact linear maps. We will denote by $\B(H)_{*}$ the predual of $\B(H)$, i.e. the Banach space of all normal linear forms on $\B(H)$. On any tensor product $H\otimes H'$ of Hilbert spaces, we define the flip operator
\begin{equation*}
\Sigma : \left\{\begin{array}{ccc}
H\otimes H' & \rightarrow & H'\otimes H \\
x\otimes y & \mapsto & y\otimes x
\end{array}\right.
\end{equation*}
We will use the usual leg-numbering notations : for an operator $X$ acting on a tensor product, we set $X_{12}:=X\otimes1$, $X_{23}:=1\otimes X$ and $X_{13}:=(\Sigma\otimes 1)(1\otimes X)(\Sigma\otimes 1)$. The identity map of an algebra $A$ will be denoted $\i_{A}$ or simply $\i$ if there is no possible confusion. For a subset $B$ of a topological vector space $C$, $\spa B$ will denote the \emph{closed linear span} of $B$ in $C$. The symbol $\otimes$ will denote the \emph{minimal} (or spatial) tensor product of C*-algebras or the topological tensor product of Hilbert spaces. The spatial tensor product of von Neumann algebras will be denoted $\overline{\otimes}$ and the algebraic tensor product (over $\C$) will be denoted $\odot$.

\subsection{Compact and discrete quantum groups}

Discrete quantum groups will be seen as duals of compact quantum groups in the sense of Woronowicz. We briefly review the basic theory of compact quantum groups as introduced in \cite{woronowicz1995compact}. Another survey, encompassing the non-separable case, can be found in \cite{maes1998notes}. Emphasis has been put on the explicit description of the associated $L^{2}$-space since this will prove crucial in the sequel.

\begin{de}
A \emph{compact quantum group} $\G$ is a pair $(C(\G), \Delta)$ where $C(\G)$ is a unital C*-algebra and $\Delta : C(\G)\rightarrow C(\G)\otimes C(\G)$ is a unital $*$-homomorphism such that
\begin{eqnarray*}
(\Delta\otimes \i)\circ\Delta & = & (\i\otimes\Delta)\circ\Delta \\
\spa\{\Delta(C(\G))(1\otimes C(\G))\} & = & C(\G)\otimes C(\G) \\
\spa\{\Delta(C(\G))(C(\G)\otimes 1)\} & = & C(\G)\otimes C(\G)
\end{eqnarray*}
\end{de}

The main feature of compact quantum groups is the existence of a Haar state which is both left and right invariant (see \cite[Thm 1.3]{woronowicz1995compact}).

\begin{thm}[Woronowicz]
Let $\G$ be a compact quantum group. There is a unique \emph{Haar state} on $\G$, that is to say a state $h$ on $C(\G)$ such that for all $a\in C(\G)$,
\begin{eqnarray*}
(\i\otimes h)\circ \D(a) = h(a).1 \\
(h\otimes \i)\circ \D(a) = h(a).1
\end{eqnarray*}
\end{thm}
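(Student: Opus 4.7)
The plan is to treat uniqueness and existence separately: uniqueness is a direct calculation exploiting the two invariance conditions, while existence proceeds by a fixed-point argument for convolution on the weak-$*$ compact state space of $C(\G)$, with the density axioms of the definition used to promote a partial fixed point into the full Haar state.

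Uniqueness is short. If $h$ and $h'$ are both two-sided invariant states, compute $(h\otimes h')\D(a)$ in two ways. Applying left invariance of $h$ first yields $h'(h(a)\cdot 1) = h(a)$; applying right invariance of $h'$ first yields $h(h'(a)\cdot 1) = h'(a)$. Hence $h = h'$.

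For existence, endow $C(\G)^{*}$ with the convolution product $\phi*\psi := (\phi\otimes\psi)\circ\D$, which is associative by coassociativity of $\D$ and preserves the convex set of states. Starting from an arbitrary state $\phi$, form the Cesàro averages
\[
\phi_{n} := \frac{1}{n}\sum_{k=1}^{n}\phi^{*k}.
\]
By Banach-Alaoglu the state space is weak-$*$ compact, so $(\phi_{n})$ admits a weak-$*$ cluster point $h$. The telescoping identity $\phi*\phi_{n}-\phi_{n} = \frac{1}{n}(\phi^{*(n+1)}-\phi)$, whose right-hand side tends to $0$ in norm, yields, after passing to a suitable subnet, $\phi*h = h*\phi = h$.

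The main obstacle is the upgrade to full invariance. Since states span $C(\G)^{*}$, the identity $(\i\otimes h)\D(a) = h(a)\cdot 1$ is equivalent to $\omega*h = h$ for every state $\omega$, and symmetrically for the right-sided identity. Promoting $\phi*h=h$ from the single $\phi$ produced above to arbitrary $\omega$ is where the density axioms $\spa\{\D(C(\G))(1\otimes C(\G))\} = C(\G)\otimes C(\G)$ and its right-sided counterpart enter decisively: they encode a cancellation property which, applied to the weak-$*$ closed convex semigroup $\{\omega : \omega*h = h\}$, forces it to coincide with the entire state space of $C(\G)$. I expect this last cancellation step, rather than the compactness argument, to be the principal technical hurdle.
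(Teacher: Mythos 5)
The paper does not prove this statement: it is quoted from Woronowicz with a citation, so your proposal can only be measured against the standard argument. Your uniqueness paragraph is correct, and the skeleton of your existence argument (convolution on the state space, Ces\`aro averages, Banach--Alaoglu, the telescoping identity giving $\phi*h=h*\phi=h$ for a cluster point $h$) is exactly how the standard proof begins.

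The final step, however, is not merely a technical hurdle you have deferred; as formulated it is false. For the cluster point $h=h_{\phi}$ obtained from an \emph{arbitrary} initial state $\phi$, the weak-$*$ closed convex semigroup $\{\omega:\omega*h=h\}$ need not be the whole state space, and no cancellation property can force it to be. Concretely, if $\phi$ is a character (e.g.\ the counit $\varepsilon$ on the universal version, or any classical point evaluation when $C(\G)=C(G)$), then $\phi*\phi=\phi$, all Ces\`aro averages equal $\phi$, and $h_{\phi}=\phi$; but $\omega*\varepsilon=\omega$ for every $\omega$, so the semigroup you describe reduces to $\{\varepsilon\}$. The correct route is different: one first proves the key lemma that if $\omega*h=h*\omega=h$ and $\rho$ is a positive functional with $\rho\leqslant\lambda\omega$, then $\rho*h=h*\rho=\rho(1)h$ --- this is where a Cauchy--Schwarz argument combined with the density axioms $\spa\{\D(C(\G))(1\otimes C(\G))\}=C(\G)\otimes C(\G)$ (and its mirror) actually enters. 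Granting the lemma, set $K_{\omega}=\{h \text{ state}: \omega*h=h*\omega=h\}$; each $K_{\omega}$ is nonempty, convex and weak-$*$ compact, and the lemma applied to $\omega=\frac{1}{n}(\omega_{1}+\dots+\omega_{n})$ shows $K_{\omega}\subseteq\bigcap_{i}K_{\omega_{i}}$, so the family has the finite intersection property and $\bigcap_{\omega}K_{\omega}\neq\emptyset$; any element of this intersection is the Haar state. So your proposal is missing both the Cauchy--Schwarz lemma and the finite-intersection mechanism that replaces your (unworkable) single-semigroup argument.
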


Let $(L^{2}(\G), \pi_{h}, \xi_{h})$ be the associated GNS construction and let $C_{\text{red}}(\G)$ be the image of $C(\G)$ under the GNS map $\pi_{h}$. It is called the \emph{reduced C*-algebra} of $\G$. Let $W$ be the unique unitary operator on $L^{2}(\G)\otimes L^{2}(\G)$ such that
\begin{equation*}
W^{*}(\xi\otimes \pi_{h}(a)\xi_{h}) = (\pi_{h}\otimes \pi_{h})\circ\Delta(a)(\xi\otimes \xi_{h})
\end{equation*} 
for $\xi \in L^{2}(\G)$ and $a\in C(\G)$, and let $\h{W} := \Sigma W^{*}\Sigma$. Then $W$ is a \emph{multiplicative unitary} in the sense of \cite{baaj1993unitaires}, i.e. $W_{12}W_{13}W_{23} = W_{23}W_{12}$ and we have the following equalities :
\begin{equation*}
C_{\text{red}}(\G) = \spa\{(\i\otimes \B(L^{2}(\G))_{*})(W)\} \text{ and } \Delta(x) = W^{*}(1\otimes x)W.
\end{equation*}
Moreover, we can define the \emph{dual discrete quantum group} $\h{\G} = (C_{0}(\h{\G}), \h{\Delta})$ by
\begin{equation*}
C_{0}(\h{\G}) = \spa\{(\B(L^{2}(\G))_{*}\otimes\i)(W)\} \text{ and } \h{\Delta}(x) = \Sigma W(x\otimes 1)W^{*}\Sigma.
\end{equation*}
The two von Neumann algebras associated to these quantum groups are then
\begin{equation*}
L^{\infty}(\G) = C_{\text{red}}(\G)'' \text{ and } \ell^{\infty}(\h{\G})=C_{0}(\h{\G})''
\end{equation*}
where the bicommutants are taken in $\B(L^{2}(\G))$. The coproducts extend to normal maps on these von Neumann algebras and one can prove that $W\in L^{\infty}(\G)\overline{\otimes} \ell^{\infty}(\h{\G})$. The Haar state of $\G$ extends to a state on $L^{\infty}(\G)$.

\subsection{Irreducible representations and the GNS construction}\label{subsec:gns}

We will need in the sequel an explicit description of the GNS construction of the Haar state $h$ using the following notion of irreducible representation of a compact quantum group.

\begin{de}
A \emph{representation} of a compact quantum group $\G$ on a Hilbert space $H$ is an operator $u\in L^{\infty}(\G)\overline{\otimes} \B(H)$ such that $(\D\otimes \i)(u) = u_{13}u_{23}$. It is said to be \emph{unitary} if the operator $u$ is unitary.
\end{de}

\begin{de}
Let $\G$ be a compact quantum group and let $u$ and $v$ be two representations of $\G$ on Hilbert spaces $H_{u}$ and $H_{v}$ respectively. An \emph{intertwiner} (or \emph{morphism}) between $u$ and $v$ is a map $T\in \B(H_{u}, H_{v})$ such that $v(1\otimes T) = (1\otimes T)u$. The set of intertwiners between $u$ and $v$ will be denoted $\Mor(u, v)$.
\end{de}

A representation $u$ will be said to be \emph{irreducible} if $\Mor(u, u) = \C.\Id$ and it will be said to be a \emph{subrepresentation} of $v$ if there is an isometric intertwiner between $u$ and $v$. We will say that two representations are \emph{equivalent} (resp. \emph{unitarily equivalent}) if there is an intertwiner between them which is an isomorphism (resp. a unitary). Let us define two fundamental operations on representations.

\begin{de}
Let $\G$ be a compact quantum group and let $u$ and $v$ be two representations of $\G$ on Hilbert spaces $H_{u}$ and $H_{v}$ respectively. The \emph{direct sum} of $u$ and $v$ is the diagonal sum of the operators $u$ and $v$ seen as an element of $L^{\infty}(\G)\otimes \B(H_{u}\oplus H_{v})$. It is a representation denoted $u\oplus v$. The \emph{tensor product} of $u$ and $v$ is the element $u_{12}v_{13}\in L^{\infty}(\G)\otimes \B(H_{u}\otimes H_{v})$. It is a representation denoted $u\otimes v$.
\end{de}

The following generalization of the classical Peter-Weyl theory holds (see \cite[Section 6]{woronowicz1995compact}).

\begin{thm}[Woronowicz]
Every representation of a compact quantum group is equivalent to a unitary one. Every irreducible representation of a compact quantum group is finite dimensional and every unitary representation is unitarily equivalent to a sum of irreducible ones. Moreover, the linear span of the coefficients of all irreducible representations is a dense Hopf $*$-subalgebra of $C(\G)$ denoted $\Pol(\G)$.
\end{thm}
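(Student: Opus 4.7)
The plan is to follow the classical Peter-Weyl strategy, using the Haar state $h$ as the averaging device to produce intertwiners. First I would establish finite dimensionality of irreducible representations by constructing non-trivial compact intertwiners. Given a unitary representation $u$ on $H$ and $K\in\KK(H)$, the operator
\begin{equation*}
T_{K} = (h\otimes\i)(u(1\otimes K)u^{*})
\end{equation*}
lies in $\Mor(u,u)\cap\KK(H)$, by a direct computation combining right invariance of $h$ with $(\D\otimes\i)(u) = u_{13}u_{23}$. Choosing $K$ to be a rank-one projection onto an appropriate vector makes $T_{K}$ a non-zero positive compact intertwiner, whose finite-rank spectral projections are non-trivial elements of $\Mor(u,u)$. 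If $u$ is irreducible, $\Mor(u,u)=\C\,\Id$ then forces $H$ to be finite dimensional.

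To unitarize a general representation $u$ I would use the positive operator $Q = (h\otimes\i)(u^{*}u)$. An application of the coproduct identity $(\D\otimes\i)(u) = u_{13}u_{23}$ together with right invariance of $h$ yields the invariance relation $u^{*}(1\otimes Q)u = 1\otimes Q$, so $Q$ plays the role of an invariant inner product. Showing $Q$ is invertible (done most naturally in the finite-dimensional setting, after reducing there using the previous step) then allows one to conjugate by $Q^{1/2}$ and obtain an equivalent unitary representation. The decomposition into irreducibles follows by a standard Zorn's lemma argument applied to orthogonal families of finite-dimensional invariant subspaces: the averaging construction above rules out any non-trivial orthogonal complement, and reducing each finite-dimensional block via spectral projections of non-scalar self-adjoint intertwiners yields irreducibility.

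For the Hopf $*$-algebra structure, stability of $\Pol(\G)$ under multiplication comes from the fact that $u\otimes v$ is a representation whose matrix coefficients are products of those of $u$ and $v$, while stability under the involution comes from contragredients: for a finite-dimensional unitary irreducible $u$, the $u_{ij}^{*}$ are matrix coefficients of the representation $\bar{u}$. The relation $\D(u_{ij})=\sum_{k}u_{ik}\otimes u_{kj}$ gives the coproduct, and the formulas $\epsilon(u_{ij})=\delta_{ij}$ and $S(u_{ij})=u_{ji}^{*}$ yield the counit and antipode. Density of $\Pol(\G)$ in $C(\G)$ then follows from decomposing the multiplicative unitary $W$ as an orthogonal sum of irreducibles and invoking $C_{\text{red}}(\G) = \spa\{(\i\otimes\B(L^{2}(\G))_{*})(W)\}$. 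I expect the main technical obstacle to be interleaving the finite-dimensionality and unitarization arguments properly: the averaging construction $T_{K}$ behaves cleanly for unitary $u$, but since one is not a priori given unitarity one must either work with bounded invariant subspaces first or keep track of both $u$ and $u^{*}$ throughout, so that the two results are established in tandem rather than sequentially.
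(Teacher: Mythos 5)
This theorem is quoted from Woronowicz and the paper gives no proof of its own, so your proposal can only be measured against the standard argument --- which is essentially what you reproduce: Haar-state averaging $(h\otimes\i)(u(1\otimes K)u^{*})$ to manufacture compact self-intertwiners (hence finite dimensionality of irreducibles), the invariant positive operator $Q=(h\otimes\i)(u^{*}u)$ for unitarization, Zorn's lemma for complete reducibility, and tensor products and contragredients for the algebra and $*$-structure of $\Pol(\G)$. Your closing remark about the order of the arguments identifies the right technical point, and it is resolved in the standard way: one treats unitary representations first (where the averaging is well behaved), obtains the Peter--Weyl decomposition and $\Pol(\G)$, and only then unitarizes general invertible representations, where the invertibility of $Q$ is automatic because everything has already been reduced to finite dimensions.

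The one step that does not go through as written is the density of $\Pol(\G)$ in $C(\G)$. The multiplicative unitary $W$ of this paper has its first leg in $L^{\infty}(\G)\subseteq\B(L^{2}(\G))$, and the identity $C_{\text{red}}(\G)=\spa\{(\i\otimes\B(L^{2}(\G))_{*})(W)\}$ lives entirely at the reduced level; decomposing $W$ into irreducibles therefore only shows that $\pi_{h}(\Pol(\G))$ is dense in $C_{\text{red}}(\G)$. Since the GNS map $\pi_{h}$ may have a nontrivial kernel (it is injective only when $h$ is faithful on $C(\G)$, and for the universal algebras $A_{o}(F)$, $A_{u}(F)$ considered in this paper it typically is not), this does not yield density of $\Pol(\G)$ in $C(\G)$. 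The missing ingredient is the pair of cancellation axioms $\spa\{\D(C(\G))(C(\G)\otimes 1)\}=C(\G)\otimes C(\G)$, which your argument never invokes beyond their role in guaranteeing the existence of $h$: in Woronowicz's proof one shows that every element of the form $(h\otimes\i)\bigl((b^{*}\otimes 1)\D(a)\bigr)$ is a norm limit of coefficients of finite-dimensional unitary representations, and such elements span a dense subspace of $C(\G)$ precisely by cancellation together with $h$ being a state. With that replacement the rest of your outline is sound.
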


Let $\Ir(\G)$ be the set of isomorphism classes of irreducible unitary representations of $\G$. If $\alpha\in \Ir(\G)$, we will denote by $u^{\alpha}$ a representative of the class $\alpha$ and by $H_{\alpha}$ the finite dimensional Hilbert space on which $u^{\alpha}$ acts. There are isomorphisms
\begin{equation*}
C_{0}(\h{\G}) = \bigoplus_{\alpha\in \Ir(\G)}\B(H_{\alpha})\text{ and }\ell^{\infty}(\h{\G}) = \prod_{\alpha\in \Ir(\G)}\B(H_{\alpha}).
\end{equation*}
The minimal central projection in $\ell^{\infty}(\h{\G})$ corresponding to the identity of $\B(H_{\alpha})$ will be denoted $p_{\alpha}$.

We now proceed to describe explicitely the GNS representation of the Haar state using the irreducible representations. For any $\alpha\in \Ir(\G)$, there is a unique (up to unitary equivalence) irreducible representation, called the \emph{contragredient representation} of $\alpha$ and denoted $\overline{\alpha}$, such that $\Mor(\varepsilon, \alpha\otimes \overline{\alpha})\neq \{0\} \neq \Mor(\varepsilon, \overline{\alpha}\otimes \alpha)$, $\varepsilon$ denoting the trivial representation. This yields an antilinear isomorphism
\begin{equation*}
j_{\alpha} : H_{\alpha}\rightarrow H_{\overline{\alpha}}.
\end{equation*}
The matrix $j_{\alpha}^{*}j_{\alpha}\in \B(H_{\alpha})$ is unique up to multiplication by a real number. We will say that $j_{\alpha}$ is \emph{normalized} if $\Tr(j_{\alpha}^{*}j_{\alpha}) = \Tr((j_{\alpha}^{*}j_{\alpha})^{-1})$ (this only determines $j_{\alpha}$ up to some complex number of modulus one, but this is of no consequence in our context). In that case we will set $Q_{\alpha} = j_{\alpha}^{*}j_{\alpha}$, $\dim_{q}(u^{\alpha}) = \Tr(Q_{\alpha}) = \Tr(Q_{\alpha}^{-1})$ and $t_{\alpha}(1) = \sum j_{\alpha}(e_{i})\otimes e_{i}$, where $(e_{i})$ is some fixed orthonormal basis of $H_{\alpha}$. We will also set $u^{\alpha}_{i, j} = (\i\otimes e_{i}^{*})u^{\alpha}(\i\otimes e_{j})$. Note that by construction, $t_{\alpha} : \C\rightarrow H_{\overline{\alpha}}\otimes H_{\alpha}$ is a morphism of representations. Let us define a map
\begin{equation*}
\psi_{\alpha} : \left\{\begin{array}{ccc}
H_{\overline{\alpha}}\otimes H_{\alpha} & \rightarrow & C_{\text{red}}(\G) \\
\eta\otimes \xi & \mapsto & \pi_{h}[(1 \otimes j_{\overline{\alpha}}(\eta)^{*})u^{\alpha}(1 \otimes\xi)]
\end{array}\right.
\end{equation*}
According to \cite[Eq. 6.8]{woronowicz1995compact} we have, for any $z, z'\in H_{\overline{\alpha}}\otimes H_{\alpha}$,
\begin{equation*}
h(\psi_{\alpha}(z')^{*}\psi_{\alpha}(z)) = \frac{1}{\dim_{q}(\alpha)}\langle z', z\rangle.
\end{equation*}
and $\Psi = \oplus_{\alpha}\sqrt{\dim_{q}(\alpha)}\psi_{\alpha}.\xi_{h} : \oplus_{\alpha} (H_{\overline{\alpha}}\otimes H_{\alpha}) \rightarrow L^{2}(\G)$ is an isometric isomorphism of Hilbert spaces. If we let $E_{i, j}$ denote the operator on $H_{\alpha}$ sending $e_{i}$ to $e_{j}$ and the other vectors of the basis to $0$, we can define another map
\begin{equation*}
\Phi_{\alpha} : \left\{\begin{array}{ccc}
H_{\overline{\alpha}}\otimes H_{\alpha} & \longrightarrow & \B(H_{\alpha}) \\
j_{\overline{\alpha}}(e_{i})\otimes e_{j} & \mapsto & E_{i, j}
\end{array}\right.
\end{equation*}
Now, we observe that $\Theta_{\alpha} = \psi_{\alpha}\circ\Phi_{\alpha}^{-1} : \B(H_{\alpha})\rightarrow C_{\text{red}}(\G)$ sends $E_{i, j}$ to $\pi_{h}(u^{\alpha}_{i, j})$ and that
\begin{eqnarray*}
h(\Theta_{\alpha}(E_{i, j})^{*}\Theta_{\alpha}(E_{k, l})) & = & \frac{1}{\dim_{q}(\alpha)}\langle (\Phi_{\alpha}^{-1}(E_{i, j}), \Phi_{\alpha}^{-1}(E_{k, l})\rangle \\
& = & \frac{1}{\dim_{q}(\alpha)}\langle j_{\alpha}(e_{i})\otimes e_{j}, j_{\alpha}(e_{k})\otimes e_{l}\rangle \\
& = & \frac{\delta_{j, l}}{\dim_{q}(\alpha)}\langle Q_{\alpha}e_{i}, e_{k}\rangle \\
& = & \frac{1}{\dim_{q}(\alpha)}\Tr(Q_{\alpha}E_{i, j}^{*}E_{k, l}).
\end{eqnarray*}
Thus, if we endow $\B(H_{\alpha})$ with the scalar product $\langle A, B\rangle_{\alpha} = \dim_{q}(\alpha)^{-1}\Tr(Q_{\alpha}A^{*}B)$, we get an isometric isomorphism of Hilbert spaces
\begin{equation*}
\Theta = \oplus_{\alpha} \Theta_{\alpha}.\xi_{h} : \oplus_{\alpha} \B(H_{\alpha}) \rightarrow L^{2}(\G).
\end{equation*}
Note that the duality map $S_{\alpha} : A \mapsto \langle A, .\rangle_{\alpha}$ being bijective on the finite dimensional space $\B(H_{\alpha})$, one can endow $\oplus_{\alpha} \B(H_{\alpha})_{*}$ with a Hilbert space structure making it isomorphic to $L^{2}(\G)$ via $\Theta\circ (\oplus_{\alpha}S_{\alpha}^{-1})$. This isomorphism is "natural" since it sends $\omega\in \B(H_{\alpha})_{*}$ to $\pi_{h}[(\i\otimes\omega)(u^{\alpha})].\xi_{h}$.

Let $u^{\alpha}$ and $u^{\beta}$ be two irreducible representations of $\G$ and assume, for the sake of simplicity, that \emph{every irreducible subrepresentation of $u^{\alpha}\otimes u^{\beta}$ appears with multiplicity one}. This is no restriction in the case of free quantum groups that we will be considering (see Theorem \ref{thm:freefusion}). Let $v_{\gamma}^{\alpha, \beta} : H_{\gamma}\rightarrow H_{\alpha}\otimes H_{\beta}$ be an isometric intertwiner. Note that $v_{\gamma}^{\alpha, \beta}Q_{\gamma} = (Q_{\alpha}\otimes Q_{\beta}) v_{\gamma}^{\alpha, \beta}$. We have,
\begin{eqnarray*}
(\i\otimes \omega)(u^{\alpha})(\i\otimes \omega')(u^{\beta}) & = & (\i\otimes \omega\otimes \omega')(u^{\alpha}_{12}u^{\beta}_{13}) \\
& = & (\i\otimes \omega\otimes \omega')(u^{\alpha}\otimes u^{\beta}) \\
& = & (\i\otimes \omega\otimes \omega')(\sum_{\gamma\subset \alpha\otimes \beta}(\i\otimes v_{\gamma}^{\alpha, \beta})u^{\gamma}(\i\otimes v_{\gamma}^{\alpha, \beta})^{*}) \\
& = & \sum_{\gamma\subset \alpha\otimes \beta}(\i\otimes[\omega\otimes \omega']^{\gamma})(u^{\gamma})
\end{eqnarray*}
where $\omega^{\gamma}(x) = \omega(v_{\gamma}^{\alpha, \beta}\circ x\circ(v_{\gamma}^{\alpha, \beta})^{*})$ for $\omega\in \B(H_{\alpha}\otimes H_{\beta})_{*}$. Using the duality map $S_{\alpha}^{-1}$, we can write the map induced on $C_{\text{red}}(\G)$ by the product under our identification : for $A\in \B(H_{\alpha})$ and $B\in \B(H_{\beta})$,
\begin{equation*}
\Theta_{\alpha}(A).\Theta_{\beta}(B) = \sum_{\gamma\subset \alpha\otimes \beta} \Theta_{\gamma}((v_{\gamma}^{\alpha, \beta})^{*}(A\otimes B)v_{\gamma}^{\alpha, \beta}).
\end{equation*}

We can now give an explicit formula for the GNS representation $\pi_{h}$. Let $x$ be a coefficient of $u^{\alpha}$ and let $\xi\in p_{\beta}L^{2}(\G)\simeq \B(H_{\beta})$. Identify $x$ with $\pi_{h}(x)\xi_{h}$, which is an element of $p_{\alpha}L^{2}(\G)\simeq \B(H_{\alpha})$. Making the identification by $\Theta$ implicit, we have
\begin{equation}\label{eq:gns}
\pi_{h}(x)\xi = \sum_{\gamma\subset\alpha\otimes \beta} (v_{\gamma}^{\alpha, \beta})^{*}(x\otimes \xi)v_{\gamma}^{\alpha, \beta} = \sum_{\gamma\subset\alpha\otimes \beta} \Ad(v_{\gamma}^{\alpha, \beta})(x\otimes \xi).
\end{equation}
 
\subsection{Free quantum groups}

We will be concerned in the sequel with the free unitary and free orthogonal quantum groups. They were first defined by A. Van Daele and S. Wang in \cite{van1996universal, wang1995free} and the definition was later slightly modified by T. Banica in \cite{banica1996theorie}. This section is devoted to briefly recalling the definition and main properties of these free quantum groups. If $A$ is a C*-algebra and if $u = (u_{i, j})$ is a matrix with coefficients in $A$, we set $\overline{u} = (u_{i, j}^{*})$.

\begin{de}\label{de:freeqgroups}
Let $N\in \N$ and let $F\in GL_{N}(\C)$ be such that $F\overline{F} \in \R.\Id$. We denote by $A_{u}(F)$ the universal unital C*-algebra generated by $N^{2}$ elements $(u_{i, j})$ such that the matrices $u=(u_{i, j})$ and $F\overline{u}F^{-1}$ are \emph{unitary}. Similarly, we 
denote by $A_{o}(F)$ the universal unital C*-algebra generated by $N^{2}$ elements $(v_{i, j})$ such that the matrix $v=(v_{i, j})$ is \emph{unitary} and $v=F\overline{v}F^{-1}$.
\end{de}

One can easily check that there is a unique coproduct $\D_{u}$ (resp. $\D_{o}$) on $A_{u}(F)$ (resp. $A_{o}(F)$) such that for all $i, j$,
\begin{eqnarray*}
\D_{u}(u_{i, j}) = \sum_{i, j = 0}^{N}u_{i, k}\otimes u_{k, j} \\
\D_{o}(v_{i, j}) = \sum_{i, j = 0}^{N}v_{i, k}\otimes v_{k, j}
\end{eqnarray*}

\begin{de}
A pair $(A_{u}(F), \D_{u})$ is called a \emph{free unitary quantum group} and will be denoted $U^{+}(F)$. A pair $(A_{o}(F), \D_{o})$ is called a \emph{free orthogonal quantum group} and will be denoted $O^{+}(F)$. Their discrete duals will be denoted respectively $\F U^{+}(F)$ and $\F O^{+}(F)$.
\end{de}

\begin{rem}
The restriction on the matrix $F$ in the definition is equivalent to requiring the fundamental representation $v$ of $O^{+}(F)$ to be irreducible. That assumption is necessary in order to get a nice description of the representation theory of $O^{+}(F)$.
\end{rem}

Any \emph{compact matrix pseudogroup} in the sense of \cite[Def. 1.1]{woronowicz1987compact} is a compact quantum subgroup of a free unitary quantum group. Moreover, if its fundamental corepresentation is equivalent to its contragredient, then it is a compact quantum subgroup of a free orthogonal quantum group. In this sense, we can see $U^{+}(F)$ and $O^{+}(F)$ as quantum generalizations of the usual unitary and orthogonal groups. The representation theory of free orthogonal quantum groups was computed by T. Banica in \cite{banica1996theorie}.

\begin{thm}[Banica]\label{thm:freefusion}
The equivalence classes of irreducible representations of $O^{+}(F)$ are indexed by the set $\N$ of integers ($u^{0}$ being the trivial representation and $u^{1}=u$ the fundamental one), each one is isomorphic to its contragredient and the tensor product is given (inductively) by
\begin{equation*}
u^{1}\otimes u^{n} = u^{n+1}\oplus u^{n-1}.
\end{equation*}
Moreover, if $N=2$, then $\dim_{q}(u^{n})=n+1$. Otherwise,
\begin{equation*}
\dim_{q}(u^{n}) = \frac{q^{n+1} - q^{-n-1}}{q - q^{-1}},
\end{equation*}
where $q + q^{-1}= \Tr(Q_{1})$ and $0\leqslant q\leqslant 1$. We will use the shorthand notation $D_{n}$ for $\dim_{q}(u^{n})$ in the sequel.
\end{thm}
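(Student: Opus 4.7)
The plan is to follow Banica's original strategy, which identifies the representation category of $O^{+}(F)$ with the Temperley--Lieb category at parameter $\delta = \Tr(Q_{1})$, and then reads off the fusion rules and quantum dimensions from well-known facts about this category. The defining relation $v = F\overline{v}F^{-1}$ combined with $F\overline{F}\in \R.\Id$ provides a distinguished nonzero intertwiner $t\in \Mor(\varepsilon, v\otimes v)$ given by $t(1) = \sum_{i, j}F_{i, j}e_{i}\otimes e_{j}$. The scalar condition $F\overline{F} = \lambda\Id$ forces the pair $(t, t^{*})$ to satisfy the ``zig-zag'' conjugate equations up to a sign, which in particular shows that $v$ is self-conjugate. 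By Woronowicz's Tannaka--Krein duality, the full tensor subcategory of representations of $O^{+}(F)$ generated by $v$ is the universal rigid C*-tensor category on one self-dual generator equipped with such a $t$.

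The next step is to identify this universal category with the Temperley--Lieb category at $\delta = \Tr(Q_{1})$. A direct diagrammatic computation shows that the universal intertwiner space $v^{\otimes n}\rightarrow v^{\otimes m}$ is linearly spanned by non-crossing planar pair diagrams, composition being vertical stacking with each closed loop contributing a factor $\delta$. Faithfulness of this description amounts to positive-definiteness of the associated diagram Gram matrices, which holds precisely when $\delta \geq 2$, i.e.\ when $q$ is real in $(0, 1]$. Inside the resulting tower of endomorphism algebras, the Jones--Wenzl idempotents $p_{n}$ form an orthogonal family of minimal projections, and we define $u^{n}$ to be the irreducible subrepresentation cut out by $p_{n}$; by construction $u^{n}\cong \overline{u^{n}}$.

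The Wenzl recursion for $p_{n+1}$ in terms of $p_{n}$ and the single cap-cup diagram translates directly into the fusion rule $u^{1}\otimes u^{n} = u^{n+1}\oplus u^{n-1}$. Since $v\cong \overline{v}$ generates $\Pol(O^{+}(F))$ as a $*$-algebra, every irreducible appears in some $v^{\otimes n}$, giving $\Ir(O^{+}(F)) = \{u^{n} : n\in \N\}$. Multiplicativity of $\dim_{q}$ on tensor products and additivity on direct sums yield the linear recurrence $D_{1}D_{n} = D_{n+1}+D_{n-1}$ with $D_{0} = 1$ and $D_{1} = \Tr(Q_{1})$. When $\Tr(Q_{1}) > 2$, writing $D_{1} = q + q^{-1}$ with $0 < q < 1$ and solving this recurrence gives the closed form stated; the boundary case $\Tr(Q_{1}) = 2$, corresponding to $q = 1$ and occurring precisely when $N = 2$, degenerates to $D_{n} = n+1$.

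The main obstacle is the Temperley--Lieb identification itself, since one must rule out any intertwiner beyond those generated by $t$. Concretely, this requires verifying that the Gram matrices of Temperley--Lieb diagrams, computed with respect to the inner product inherited from the Haar state on $v^{\otimes n}$, are nondegenerate. This is the crucial combinatorial input in Banica's argument and can be handled either via Lickorish's explicit determinant formula for these Gram matrices, or by comparison with the already-established representation theory of $SU_{q}(2)$, which shares the same categorical structure for the matching value of $q$.
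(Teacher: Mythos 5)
The paper offers no proof of this statement---it is quoted directly from Banica's \cite{banica1996theorie}---and your outline is essentially Banica's original argument (the canonical duality morphism $t$ coming from $v=F\overline{v}F^{-1}$ and $F\overline{F}\in\R.\Id$, Woronowicz's Tannaka--Krein duality plus universality of $A_{o}(F)$ to identify the intertwiner category with Temperley--Lieb at $\delta=\Tr(Q_{1})\geqslant N\geqslant 2$, Jones--Wenzl idempotents and the Wenzl recursion for the fusion rules, and the dimension recurrence $D_{1}D_{n}=D_{n+1}+D_{n-1}$), with the one genuinely delicate step---linear independence of the non-crossing diagrams---correctly isolated and given a valid resolution via nondegeneracy of the Temperley--Lieb Gram matrices for $\delta\geqslant 2$ or comparison with $SU_{q}(2)$. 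The only caveat is inherited from the statement itself: the dichotomy governing the dimension formula is really $\Tr(Q_{1})=2$ versus $\Tr(Q_{1})>2$ rather than $N=2$ versus $N>2$ (a non-unitary $F\in GL_{2}(\C)$ yields $SU_{q}(2)$ with $\Tr(Q_{1})>2$), so your phrase ``occurring precisely when $N=2$'' should be weakened to ``forcing $N=2$''.
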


\begin{rem}
The following inequality always holds : $q+q^{-1} \geqslant N$.
\end{rem}

The representation theory of $U^{+}(F)$ was also explicitely computed by T. Banica in \cite{banica1997groupe}. However, we will only need the following result \cite[Thm. 1]{banica1997groupe} (see \cite{wang1995free} for the definition of the free product of discrete quantum groups).

\begin{thm}[Banica]\label{thm:freesubgroup}
The discrete quantum group $\F U^{+}(F)$ is a quantum subgroup of $\Z\ast \F O^{+}(F)$.
\end{thm}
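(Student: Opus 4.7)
The plan is to produce, on the compact side, an injective Hopf $*$-algebra morphism
\[
\pi : \Pol(U^{+}(F)) \hookrightarrow \Pol(\T \ast O^{+}(F)),
\]
where $\T \ast O^{+}(F)$ denotes the free product compact quantum group in the sense of S. Wang, whose discrete dual is precisely $\Z \ast \F O^{+}(F)$. Such an injection is the compact-side translation of $\F U^{+}(F)$ being a discrete quantum subgroup of $\Z \ast \F O^{+}(F)$.

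Let $z$ be the canonical unitary generator of $\Pol(\T)$, so that $\Delta(z) = z \otimes z$, and let $v = (v_{i,j})$ be the fundamental representation of $O^{+}(F)$. I would construct $\pi$ by applying the universal property of $A_{u}(F)$ to the candidate $\pi(u_{i,j}) = z v_{i,j}$. Writing $\tilde u := (z v_{i,j})$, one checks $\tilde u^{*}\tilde u = v^{*}(z^{*}z)v = I$ and similarly $\tilde u \tilde u^{*} = I$, so $\tilde u$ is unitary; moreover $\overline{\tilde u} = \bar v z^{*}$, so that
\[
F \overline{\tilde u} F^{-1} = (F \bar v F^{-1}) z^{*} = v z^{*},
\]
which is again unitary. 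Universality then produces the $*$-homomorphism $\pi$, and compatibility with the coproducts is immediate from $\Delta(z v_{i,j}) = \sum_{k} z v_{i,k} \otimes z v_{k,j}$.

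The remaining and main obstacle is to prove injectivity of $\pi$ on $\Pol(U^{+}(F))$. I would proceed representation-theoretically: by Wang's general description of the representation theory of free products, the irreducibles of $\T \ast O^{+}(F)$ are indexed by alternating words in the non-trivial irreducibles of $\T$ and of $O^{+}(F)$, and distinct words yield inequivalent irreducibles. On the other hand, Banica's classification of $\Ir(U^{+}(F))$ labels its irreducibles by words in two letters $\{1, \bar 1\}$, each occurring as a prescribed subrepresentation of the corresponding tensor product of copies of $u$ and $\bar u$. Under $\pi$ one has $\pi(u) = zv$ and $\pi(\bar u) = \bar v z^{*}$, so a word of length $\ell$ in $u, \bar u$ is mapped to an alternating word of length $2\ell$ in $\T \ast O^{+}(F)$, in which the $\T$-letters are prescribed by the pattern of bars.

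The hard part is this combinatorial matching of the two fusion-rule presentations: one needs to verify that the map from words on $\{1,\bar 1\}$ to alternating words in $\{z^{\pm 1}\}$ and $\{v^{n}\}$ induced by $\pi$ is injective and that the irreducible summand of $\pi(u)^{\otimes \ldots}$ corresponding to a given Banica word lies in the alternating-word irreducible of matching shape. Once this bijection of labels is established, Peter--Weyl orthogonality in $\Pol(\T \ast O^{+}(F))$ forces the images of coefficients of distinct irreducibles of $\Pol(U^{+}(F))$ to be linearly independent, and injectivity of $\pi$ follows. With Banica's description of $\Ir(U^{+}(F))$ and Wang's description of the free-product fusion ring at hand, this verification is direct, if somewhat intricate.
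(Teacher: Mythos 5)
The paper offers no proof of this statement: it is quoted verbatim from Banica's \emph{Le groupe quantique compact libre $U(n)$} (Th\'eor\`eme 1 there), so your proposal has to be measured against that source. The first half of your argument --- invoking the universal property of $A_{u}(F)$ after checking that $\tilde u=(zv_{i,j})$ and $F\overline{\tilde u}F^{-1}=vz^{*}$ are unitary, and verifying compatibility with the coproducts --- is exactly how Banica's proof begins, and is correct as written. Your translation of ``discrete quantum subgroup'' into an injective morphism $\Pol(U^{+}(F))\hookrightarrow \Pol(\mathbb{T}\ast O^{+}(F))$ compatible with the coproducts is also the right formulation.

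The injectivity step, however, is circular as sketched. The classification of $\Ir(U^{+}(F))$ by words in two letters, which you take as an input, is precisely what Banica \emph{deduces from} this embedding: in the cited paper the logical order is embedding first, fusion rules of $U^{+}(F)$ second. Unless you supply an independent derivation of the fusion semiring of $U^{+}(F)$ (e.g.\ via Tannaka--Krein duality and the category of two-coloured non-crossing pair partitions, which is a substantial argument in its own right), you cannot invoke it here. The non-circular route is a dimension count on intertwiner spaces: universality gives a surjection of CQG algebras from $A_{u}(F)$ onto the subalgebra generated by the $zv_{i,j}$, hence inclusions of the spaces $\Mor(1,\cdot)$ for every word in $u,\bar u$; one then computes the dimension of the fixed-point spaces of the corresponding words in $zv,\overline{zv}$ inside the free product (using the freeness of the Haar state) and shows it coincides with the combinatorial lower bound valid for the universal object, forcing equality and hence injectivity by Peter--Weyl. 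Two further points about your version: linear independence of coefficients of \emph{distinct} irreducibles is not enough --- you also need each $(\pi\otimes\i)(u^{\alpha})$ to be irreducible, since injectivity of $\pi$ on the coefficient space of a single $\alpha$ already amounts to comparing $(\dim H_{\alpha})^{2}$ with the dimension of the coefficient space of the image; and the image of a length-$\ell$ Banica word is an alternating word of length \emph{at most} $2\ell$, not exactly $2\ell$ (for instance $u\otimes\bar u$ maps to $z(v\otimes v)z^{*}$, whose nontrivial part is the length-three word $zu^{2}z^{*}$).
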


The following lemma summarizes some standard calculations which will be used several times in the sequel.

\begin{lem}\label{lem:quantumdimension}
Let $a>b$ be integers, then $D_{a-b}^{-1}\leqslant D_{b}/D_{a}\leqslant q^{a-b}$. Moreover, for any integer $c$, $q^{c}D_{c}\leqslant (1-q^{2})^{-1}$.
\end{lem}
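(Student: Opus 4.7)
The plan is to work directly from the closed-form expression for $D_n$ given in Theorem \ref{thm:freefusion}. Assume first $0<q<1$; the degenerate case $q=1$ (which occurs only for $N=2$, where $D_n = n+1$) is elementary to check by hand, and the second inequality is then vacuous since $(1-q^2)^{-1} = +\infty$. Multiplying numerator and denominator of the formula by $-q^n$, one obtains the more convenient form
\[
D_n = q^{-n}\cdot\frac{1-q^{2n+2}}{1-q^2}.
\]
Every part of the lemma will follow by plugging this expression into the quantity to be bounded and performing an elementary comparison of positive factors.

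The second claim is immediate from this rewriting: for any $c\geqslant 0$,
\[
q^{c}D_{c} = \frac{1-q^{2c+2}}{1-q^2} \leqslant \frac{1}{1-q^2},
\]
since $q^{2c+2}\geqslant 0$. For the upper bound on $D_b/D_a$, a direct computation gives
\[
\frac{D_b}{D_a} = q^{a-b}\cdot\frac{1-q^{2b+2}}{1-q^{2a+2}},
\]
and $a>b$ together with $0<q<1$ forces $q^{2a+2}\leqslant q^{2b+2}$, so the second factor lies in $(0,1]$ and the bound follows.

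For the lower bound $D_{a-b}^{-1}\leqslant D_b/D_a$, the same rewriting yields $1/D_{a-b} = q^{a-b}(1-q^2)/(1-q^{2(a-b)+2})$. Canceling the common factor $q^{a-b}$, the inequality reduces to
\[
(1-q^{2b+2})(1-q^{2(a-b)+2}) \geqslant (1-q^2)(1-q^{2a+2}).
\]
Setting $x=q^2$, $m=b+1$, $n=a-b$, so that $a+1=m+n$, expanding both sides shows that the difference LHS $-$ RHS factors as $x(1-x^{n})(1-x^{m-1})$, which is non-negative for $0<x\leqslant 1$ whenever $n\geqslant 1$ and $m-1\geqslant 0$ (precisely the hypotheses $a>b$ and $b\geqslant 0$). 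Spotting this clean factorization is the only non-routine step; once in hand, the entire lemma is a mechanical verification from the explicit formula for $D_n$.
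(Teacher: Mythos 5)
Your proof is correct, and all the algebra checks out: the rewriting $D_n = q^{-n}(1-q^{2n+2})/(1-q^2)$ is right (and is in fact the identity the paper itself invokes for the second claim), the upper bound follows immediately from $q^{2a+2}\leqslant q^{2b+2}$, and your factorization of the difference as $x(1-x^{n})(1-x^{m-1})$ with $x=q^2$, $m=b+1$, $n=a-b$ is verified by direct expansion, giving the lower bound under exactly the hypotheses $a>b\geqslant 0$. However, your route differs from the paper's. The paper does not expand the closed formula for the first claim; instead it uses the fusion rules of Theorem \ref{thm:freefusion} together with multiplicativity of the quantum dimension to get $D_{b+n}D_{a+n+1} = D_{a-b+1}+\cdots+D_{a+b+2n+1} \leqslant D_{a-b-1}+\cdots+D_{a+b+2n+1} = D_{b+n+1}D_{a+n}$, concluding that the sequence $(D_{b+n}/D_{a+n})_{n\geqslant -b}$ is increasing; the two bounds then appear simultaneously as its first term $D_0/D_{a-b}=D_{a-b}^{-1}$ and its limit $q^{a-b}$. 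That argument is more structural --- it only uses the recursive fusion data and would survive in any setting with the same fusion rules, and it explains \emph{why} $q^{a-b}$ shows up (as a limit of ratios) --- whereas yours is a self-contained elementary verification whose only creative step is spotting the factorization. Your treatment of the degenerate case $q=1$ is a harmless addition: in the paper this case is excluded anyway, since Section \ref{subsec:blocks} fixes $N>2$, forcing $q+q^{-1}\geqslant 3$ and hence $q<1$.
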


\begin{proof}
Let $n\in \Z$ such that $n\geqslant -b$. Decomposing $u^{b+n}\otimes u^{a+n+1}$ and $u^{b+n+1}\otimes u^{a+n}$ into sums of irreducible representations yields
\begin{equation*}
D_{b+n}D_{a+n+1} = D_{a-b+1} + \dots + D_{a+b+2n+1} \leqslant D_{a-b-1} + \dots + D_{a+b+2n+1} = D_{b+n+1}D_{a+n}
\end{equation*}
This inequality means that the sequence $(D_{b+n}/D_{a+n})_{n\geqslant -b}$ is increasing, thus any term is greater than its first term $D_{a-b}^{-1}$ and less than its limit $q^{a-b}$. The second part of the lemma is obvious since $q^{c}D_{c} = (1-q^{2c+2})/(1-q^{2})$.
\end{proof}

\section{Weak amenability for discrete quantum groups}\label{sec:quantumwa}

We now give some definitions and properties concerning weak amenability for discrete quantum groups. It is based on the notion of multipliers associated to bounded functions.

\begin{de}
Let $\h{\G}$ be a discrete quantum group and $a\in \ell^{\infty}(\h{\G})$. The \emph{left multiplier} associated to $a$ is the map $m_{a} : \Pol(\G) \rightarrow \Pol(\G)$ defined by
\begin{equation*}
(m_{a}\otimes \i)(u^{\alpha}) = (1\otimes ap_{\alpha})u^{\alpha},
\end{equation*}
for any irreducible representation $\alpha$ of $\G$.
\end{de}

\begin{rem}
This definition relies on the identification of $\ell^{\infty}(\h{\G})$ with $\prod \B(H_{\alpha})$ which is specific to the case of discrete quantum groups. However, since $W$ reads as $\prod u^{\alpha}$ in this identification, we can equivalently define the multiplier $m_{a}$ in the following way : for any $\omega\in \B(L^{2}(\G))_{*}$, $m_{a}((\i\otimes \omega)(W)) = (\i\otimes \omega)((1\otimes a)W)$. This definition makes sense in a more general setting and corresponds to the definition of J. Kraus and Z.J. Ruan in \cite{kraus1999approximation} for Kac algebras and to the definition of M. Junge, M. Neufang and Z.J. Ruan in \cite{junge2009representation} (see also \cite{daws2011multipliers}) for locally compact quantum groups.
\end{rem}

\begin{de}
A net $(a_{i})$ of elements of $\ell^{\infty}(\h{\G})$ is said to \emph{converge pointwise} to $a\in \ell^{\infty}(\h{\G})$ if $a_{i}p_{\alpha} \rightarrow ap_{\alpha}$ for any irreducible representation $\alpha$ of $\G$. An element $a\in \ell^{\infty}(\h{\G})$ is said to have \emph{finite support} if $ap_{\alpha}$ is non-zero only for a finite number of irreducible representations $\alpha$.
\end{de}

The key point to get a suitable definition of weak amenability is to have an intrinsic characterization of the completely bounded norm of a multiplier. Such a characterization is given by the following theorem \cite[Prop 4.1 and Thm 4.2]{daws2011multipliers}.

\begin{thm}[Daws]\label{theorem:quantumgilbert}
Let $\h{\G}$ be a discrete quantum group and $a\in \ell^{\infty}(\h{\G})$. Then $m_{a}$ extends to a competely bounded multiplier on $\B(L^{2}(\G))$ if and only if there exists a Hilbert space $K$ and two maps $\xi, \eta \in \B(L^{2}(\G), L^{2}(\G)\otimes K)$ such that $\|\xi\|\|\eta\| = \|m_{a}\|_{cb}$ and
\begin{equation}\label{eq:quantumgilbert}
(1\otimes \eta)^{*}\h{W}_{12}^{*}(1\otimes \xi)\h{W} = a\otimes 1.
\end{equation}
Moreover, we then have $m_{a}(x) = \eta^{*}(x\otimes 1)\xi$.
\end{thm}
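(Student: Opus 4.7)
This theorem is the quantum-group analogue of the Gilbert/Haagerup representation theorem for completely bounded Schur multipliers, with the auxiliary Hilbert space $K$ playing the role of a Stinespring space and relation (\ref{eq:quantumgilbert}) encoding the multiplier (covariance) property via the multiplicative unitary $\h{W}$. The plan is to prove the two implications separately, relying on the Wittstock/Paulsen completely bounded factorization theorem for the nontrivial direction and on a direct computation with the multiplicative unitary for the other.

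For the sufficient direction, assume $(K,\xi,\eta)$ satisfies (\ref{eq:quantumgilbert}) and define
\begin{equation*}
\Phi : x \longmapsto \eta^{*}(x\otimes 1)\xi
\end{equation*}
on $\B(L^{2}(\G))$. This is completely bounded with $\|\Phi\|_{cb}\leq \|\xi\|\|\eta\|$ because it factors through the amplifying $*$-homomorphism $x\mapsto x\otimes 1_{K}$. To see that $\Phi$ extends $m_{a}$, I would invoke the intrinsic description $m_{a}((\i\otimes \w)(W))=(\i\otimes \w)((1\otimes a)W)$ recalled in the remark following the definition of $m_{a}$: slicing (\ref{eq:quantumgilbert}) by an arbitrary $\w\in \B(L^{2}(\G))_{*}$ on the appropriate leg and using $\h{W}=\Sigma W^{*}\Sigma$ together with $C_{\text{red}}(\G)=\spa\{(\i\otimes \w)(W)\}$, a leg-numbering manipulation identifies $\Phi((\i\otimes \w)(W))$ with $(\i\otimes \w)((1\otimes a)W)$, so that $\Phi$ agrees with $m_{a}$ on $\Pol(\G)$.

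For the necessary direction, assume $m_{a}$ is completely bounded with $c:=\|m_{a}\|_{cb}$. I would apply the Wittstock/Haagerup/Paulsen factorization theorem for completely bounded maps on $\B(L^{2}(\G))$ to obtain a Hilbert space $K$ and operators $\xi,\eta\in \B(L^{2}(\G),L^{2}(\G)\otimes K)$ with $m_{a}(x)=\eta^{*}(x\otimes 1)\xi$ and $\|\xi\|\|\eta\|=c$. The remaining, and substantive, step is to extract the operator identity (\ref{eq:quantumgilbert}) from this factorization; this is where I expect the real obstacle, because the Wittstock decomposition carries no quantum-group information by itself. One must exploit that $m_{a}$ is specifically a multiplier, through the intrinsic identities $m_{a}((\i\otimes \w)(W))=(\i\otimes \w)((1\otimes a)W)$ for every $\w\in \B(L^{2}(\G))_{*}$. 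Running in reverse the leg-numbering computation of the easy direction, and pairing this scalar family with the Stinespring-type formula $m_{a}(x)=\eta^{*}(x\otimes 1)\xi$, should convert the whole family of multiplier relations into the single operator identity (\ref{eq:quantumgilbert}) inside $\ell^{\infty}(\h{\G})\overline{\otimes}\B(L^{2}(\G))$. Invariance under the residual unitary freedom on $K$ in the Wittstock factorization is immediate, since the left-hand side of (\ref{eq:quantumgilbert}) is unchanged by the substitution $\xi\mapsto (1\otimes U)\xi$, $\eta\mapsto (1\otimes U)\eta$, so no readjustment of $\xi,\eta$ is needed.
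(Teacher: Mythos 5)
This statement is quoted from \cite[Prop 4.1 and Thm 4.2]{daws2011multipliers}; the paper gives no proof of its own, so your attempt can only be measured against the known argument of Daws (building on Junge--Neufang--Ruan). Your ``if'' direction is fine in outline: complete boundedness of $x\mapsto \eta^{*}(x\otimes 1)\xi$ is immediate, and slicing (\ref{eq:quantumgilbert}) against normal functionals, using the corepresentation property of $\h{W}$, does identify this map with $m_{a}$ on $\Pol(\G)$.

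The ``only if'' direction, however, contains a genuine gap exactly at the point you flag and then wave past. Wittstock/Paulsen gives you \emph{some} pair $(\xi,\eta)$ with $m_{a}(x)=\eta^{*}(x\otimes 1)\xi$ and the right norm (and even putting the dilation in the amplified form $x\mapsto x\otimes 1_{K}$, rather than a general $*$-representation $\pi(x)$, already uses normality of the extension to $\B(L^{2}(\G))$). But nothing in that decomposition forces (\ref{eq:quantumgilbert}). The identity (\ref{eq:quantumgilbert}) is a condition on the pair $(\xi,\eta)$ itself that is strictly stronger than the condition that the composite $x\mapsto\eta^{*}(x\otimes 1)\xi$ agree with $m_{a}$ on $C_{\text{red}}(\G)$: its left-hand side probes $\xi$ and $\eta$ through $\h{W}_{12}^{*}(1\otimes\xi)\h{W}$, which is not of the form $(x\otimes 1)$ for $x$ in the reduced algebra, so ``running the leg-numbering computation in reverse'' only recovers the values of the composite map on $C_{\text{red}}(\G)$ and cannot yield the operator identity. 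The multiplier relations $m_{a}((\i\otimes\w)(W))=(\i\otimes\w)((1\otimes a)W)$ constrain the composite, not the factors. In the actual proof the pair is \emph{constructed} to be covariant: one first shows that the canonical extension of $m_{a}$ to $\B(L^{2}(\G))$ is a normal completely bounded $L^{\infty}(\G)'$-bimodule map (equivalently, commutes suitably with the adjoint action of $\h{W}$), and then the structure theorem for such bimodule maps produces $\xi,\eta$ satisfying the covariance relation --- this is the quantum analogue of the Bo\.{z}ejko--Fendler/Jolissaint averaging step for Herz--Schur multipliers, and it is where all the work lies. Finally, your closing remark about the residual unitary freedom on $K$ is beside the point: it shows that (\ref{eq:quantumgilbert}) is stable under one particular symmetry of the factorization, not that some Wittstock factorization satisfies it; the non-uniqueness of Wittstock decompositions is far larger than conjugation by a unitary on $K$.
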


Notice that thanks to this theorem, the completely bounded norm of $m_{a}$ is the same when it is extended to $C_{\text{red}}(\G)$, $L^{\infty}(\G)$ or $\B(L^{2}(\G))$. Denoting by $\|m_{a}\|_{cb}$ this norm, we can give a definition of weak amenability.

\begin{de}\label{de:quantumwa}
A discrete quantum group $\h{\G}$ is said to be \emph{weakly amenable} if there exists a net $(a_{i})$ of elements of $\ell^{\infty}(\h{\G})$ such that
\begin{itemize}
\item $a_{i}$ has finite support for all $i$.
\item $(a_{i})$ converges pointwise to $1$.
\item $K:=\limsup_{i} \|m_{a_{i}}\|_{cb}$ is finite.
\end{itemize}
The lower bound of the constants $K$ for all nets satisfying these properties is denoted $\Lambda_{cb}(\h{\G})$ and called the \emph{Cowling-Haagerup constant} of $\h{\G}$. By convention, $\Lambda_{cb}(\h{\G})=\infty$ if $\h{\G}$ is not weakly amenable.
\end{de}

It is clear on the definition that a discrete group $G$ is weakly amenable in the classical sense (see e.g. \cite[Def. 12.3.1]{brown2008finite}) if and only if the associated discrete quantum group is weakly amenable (and the constants are the same). We recall the following notions of weak amenability for operator algebras.

\begin{de}
A C*-algebra $A$ is said to be \emph{weakly amenable} if there exists a net $(T_{i})$ of linear maps from $A$ to itself such that
\begin{itemize}
\item $T_{i}$ has finite rank for all $i$.
\item $\|T_{i}(x)-x\|\rightarrow 0$ for all $x\in A$.
\item $K :=\limsup_{i}\|T_{i}\|_{cb}$ is finite.
\end{itemize}
The lower bound of the constants $K$ for all nets satisfying these properties is denoted $\Lambda_{cb}(A)$ and called the \emph{Cowling-Haagerup constant} of $A$. By convention, $\Lambda_{cb}(A) = \infty$ if the C*-algebra $A$ is not weakly amenable.

A von Neumann algebra $N$ is said to be \emph{weakly amenable} if there exists a net $(T_{i})$ of normal linear maps from $N$ to itself such that
\begin{itemize}
\item $T_{i}$ has finite rank for all $i$.
\item $(T_{i}(x)-x) \rightarrow 0$ ultraweakly for all $x\in N$.
\item $K :=\limsup_{i}\|T_{i}\|_{cb}$ is finite.
\end{itemize}
The lower bound of the constants $K$ for all nets satisfying these properties is denoted $\Lambda_{cb}(N)$ and called the \emph{Cowling-Haagerup constant} of $N$. By convention, $\Lambda_{cb}(N) = \infty$ if the von Neumann algebra $N$ is not weakly amenable.
\end{de}

See \cite[Thm. 5.14]{kraus1999approximation} for a proof of the following result.

\begin{thm}[Kraus-Ruan]\label{thm:quantumwa}
Let $\h{\G}$ be a \emph{unimodular} (i.e. the Haar state on $\G$ is tracial) discrete quantum group, then
\begin{equation*}
\Lambda_{cb}(\h{\G}) = \Lambda_{cb}(C_{\text{red}}(\G)) = \Lambda_{cb}(L^{\infty}(\G)).
\end{equation*}
\end{thm}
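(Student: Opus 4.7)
The plan is to prove the chain of inequalities $\Lambda_{cb}(\h{\G}) \geq \Lambda_{cb}(C_{\text{red}}(\G)) \geq \Lambda_{cb}(L^{\infty}(\G)) \geq \Lambda_{cb}(\h{\G})$. The first two inequalities are essentially formal; the third is where unimodularity enters in an essential way.

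For $\Lambda_{cb}(\h{\G}) \geq \Lambda_{cb}(C_{\text{red}}(\G))$, I would start from a net $(a_i)$ in $\ell^{\infty}(\h{\G})$ witnessing weak amenability with $\limsup_i \|m_{a_i}\|_{cb} = K$. By Theorem \ref{theorem:quantumgilbert}, each $m_{a_i}$ extends to a completely bounded map on $C_{\text{red}}(\G)$ (and on $L^{\infty}(\G)$ and on $\B(L^{2}(\G))$) with the same completely bounded norm. Since $a_{i}$ has finite support, $m_{a_{i}}$ annihilates $\Theta_{\beta}(\B(H_{\beta}))$ for all but finitely many $\beta \in \Ir(\G)$, and each such block is finite-dimensional, so $m_{a_{i}}$ has finite rank on $C_{\text{red}}(\G)$. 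Pointwise convergence $a_{i} \to 1$ implies $m_{a_{i}}(x) \to x$ for every matrix coefficient $x$, and a uniform CB bound together with density of $\Pol(\G)$ in $C_{\text{red}}(\G)$ upgrades this to norm convergence. Exactly the same argument, replacing norm convergence by ultraweak convergence on coefficients, yields $\Lambda_{cb}(\h{\G}) \geq \Lambda_{cb}(L^{\infty}(\G))$.

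For the converse direction, the strategy is to average a given finite rank CB map into a multiplier. Given a normal CB map $T : L^{\infty}(\G) \to L^{\infty}(\G)$, I would construct a symbol $a_T \in \ell^{\infty}(\h{\G})$ as follows: under the identification $L^{2}(\G) \simeq \oplus_{\alpha} \B(H_{\alpha})$ of Section \ref{subsec:gns}, the central projection $p_{\alpha}$ gives a projection $P_{\alpha}$ of $L^{2}(\G)$ onto the $\alpha$-isotypic component. One would set $a_{T} p_{\alpha}$ to be the symbol of the multiplier induced by $P_{\alpha} \circ T \circ P_{\alpha}$, explicitly of the form
\begin{equation*}
a_{T} p_{\alpha} = (\Tr \otimes \i)\bigl((Q_{\alpha}\otimes 1)(u^{\alpha})^{*}(\i \otimes T)(u^{\alpha})\bigr)/\dim_{q}(\alpha),
\end{equation*}
or an analogous formula, designed so that $m_{a_{T}}$ coincides with $T$ whenever $T$ was already a multiplier. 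If $T$ has finite rank then only finitely many blocks contribute, so $a_{T}$ has finite support; if $T_{i} \to \i$ ultraweakly, then $a_{T_{i}} p_{\alpha} \to p_{\alpha}$ for each $\alpha$.

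The main obstacle is establishing the contraction $\|m_{a_{T}}\|_{cb} \leq \|T\|_{cb}$. The natural tool is Daws' intrinsic characterization (Theorem \ref{theorem:quantumgilbert}): starting from a Stinespring-type factorization $T(x) = V^{*}\pi(x)U$ of a CB map, one would build Hilbert space maps $\xi, \eta$ satisfying \eqref{eq:quantumgilbert} for $a_{T}$ by convolving $U, V$ with the regular representation encoded in $\h{W}$, and control $\|\xi\|\|\eta\|$ by $\|T\|_{cb}$. The tracial property of the Haar state $h$ is exactly what makes the projections $P_{\alpha}$ orthogonal in $L^{2}(\G)$ and the convolution manipulation CB-contractive; in the non-unimodular case the operators $Q_{\alpha}$ enter the formula unbounded, and the averaging map $T \mapsto m_{a_{T}}$ fails to preserve the CB norm, which is why the statement must be restricted to unimodular $\h{\G}$. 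The same procedure, applied to finite rank CB maps on $C_{\text{red}}(\G)$ converging in norm to the identity, simultaneously yields $\Lambda_{cb}(C_{\text{red}}(\G)) \geq \Lambda_{cb}(\h{\G})$ and closes the loop.
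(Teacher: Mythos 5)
The paper itself offers no proof of this statement --- it is quoted from Kraus--Ruan --- so the only question is whether your argument stands on its own. The easy half does: producing finite rank maps on $C_{\text{red}}(\G)$ and $L^{\infty}(\G)$ from a net of finitely supported multipliers is correct as you describe it (modulo the small point that pointwise ultraweak convergence of $m_{a_i}$ on all of $L^{\infty}(\G)$ is best obtained by first noting $\sup_i\|a_i\|\leqslant\sup_i\|m_{a_i}\|_{cb}$ and passing through $L^{2}$-convergence, rather than by invoking density alone). The converse half, which is the actual content of the theorem, has two genuine gaps.

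First, the claim that ``if $T$ has finite rank then only finitely many blocks contribute, so $a_T$ has finite support'' is false. Writing $T=\sum_k\omega_k(\cdot)\,y_k$, the $\alpha$-block of $a_T$ involves the Fourier coefficients $(h\otimes\i)\bigl((y_k\otimes 1)(u^{\alpha})^{*}\bigr)$ of the vectors $y_k$ spanning the range, and these are nonzero for infinitely many $\alpha$ unless $y_k\in\Pol(\G)$. In the C*-algebra case one can repair this by perturbing $T$ in cb norm so that its range lies in $\Pol(\G)$; in the von Neumann case $\Pol(\G)$ is only ultraweakly dense and no such perturbation is available, so one must instead show that $a_T$ is square-summable against the Haar weight and prove a separate truncation lemma allowing $\ell^2$-symbols to be replaced by finitely supported ones without increasing the cb-multiplier constant. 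This is a genuine step of the Kraus--Ruan (and of Haagerup's original) argument and is absent from your outline.

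Second, the inequality $\|m_{a_T}\|_{cb}\leqslant\|T\|_{cb}$ --- the heart of the theorem and the only place unimodularity is truly used --- is labelled ``the main obstacle'' and then only gestured at. No maps $\xi,\eta$ satisfying Equation~(\ref{eq:quantumgilbert}) are constructed, and it is not clear that Daws' criterion is the efficient route. The standard proof instead exhibits $m_{a_T}$ as a composition $E\circ(T\otimes\i)\circ\Delta$, where $\Delta$ is the (completely isometric) comultiplication and $E$ is a normal u.c.p.\ map of the form $(h\otimes\i)\circ\Ad(\mathcal{W})$ for a unitary $\mathcal{W}$ built from $W$; verifying that $E$ sends $T(u^{\alpha}_{im})\otimes u^{\alpha}_{mk}$ to the correct convolution of coefficients uses the orthogonality relations $h(u^{\alpha}_{ij}(u^{\alpha}_{kl})^{*})=h((u^{\alpha}_{ij})^{*}u^{\alpha}_{kl})=\delta_{ik}\delta_{jl}/\dim(\alpha)$, i.e.\ the absence of the matrices $Q_{\alpha}$, which is exactly traciality of $h$. (Your remark that the trace is what makes the isotypic projections $P_{\alpha}$ orthogonal in $L^{2}(\G)$ is off the mark: that orthogonality is Peter--Weyl and holds for every compact quantum group; unimodularity enters only in the complete positivity of the averaging map $E$.) Until this factorization, or an equivalent construction, is carried out, the converse inequalities are not established.
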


\section{Block decomposition and Haagerup inequality}\label{subsec:blocks}

Our aim is to prove a polynomial bound for the completely bounded norm of the projection on the linear span of coefficients of an irreducible representation $u^{d}$ in $C_{\text{red}}(O^{+}(F))$. Let us give some motivation for this. First note that this projection is simply the multiplier $m_{p_{d}}$ associated to $p_{d}\in \ell^{\infty}(\h{\G})$. If we choose for every integer $k$ and real number $t$ a scalar coefficient $b_{k}(t)$, we can define a net of (radial) elements
\begin{equation*}
a_{i}(t) = \sum_{k=0}^{i}b_{k}(t)p_{k}\in \ell^{\infty}(\h{\G}).
\end{equation*}
If the $b_{k}(t)$ have sufficiently nice properties and if the completely bounded norm of the operators $m_{p_{d}}$ can be controlled, the net $(a_{i}(t))$ will satisfy all the hypothesis in Definition \ref{de:quantumwa} and $\F O^{+}(F)$ will be weakly amenable.

Our strategy to obtain the polynomial bound is inspired from the proof of U. Haagerup's estimate for the completely bounded norm of projections on words of fixed length in free groups. The original proof is unpublished but the argument is detailed in G. Pisier's book \cite{pisier2003introduction}. Following the scheme of the proof, we will first, in this section, prove an operator-valued analogue of the Haagerup inequality.

From now on, we fix an integer $N > 2$ and a matrix $F\in GL_{N}(\C)$ satisfying $F\overline{F} \in \R.\Id$. We will write $\H$ for the Hilbert space  $L^{2}(O^{+}(F))$ which is identified to $\oplus_{k} \B(H_{k})$ as explained in Subsection \ref{subsec:gns} ($H_{k}$ being the carrier Hilbert space of the $k$-th irreducible representation). Let $H$ be a fixed Hilbert space and let $X\in \B(H)\odot \Pol(O^{+}(F))$ (it is enough to control the norm on this dense subalgebra), chose $d\in \N$ and set $X^{d} = (\i\otimes m_{p_{d}})(X)$. These objects should be thought of as "operator-valued functions with finite support" on $\F O^{+}(F)$. Our aim is to control the norm of $X^{d}$ using the norm of $X$.

\begin{rem}
Recall from \cite{vergnioux2007property} that there is a natural length function on $\Ir(O^{+}(F))$ such that the irreducible representation $u_{d}$ has length $d$. Using this notion, one could give a rigorous definition of "operator-valued functions with support in the words of length $d$". This, however, will not be needed here.
\end{rem}

We start by decomposing the operators into more elementary ones. For any two integers $a$ and $b$, we set
\begin{eqnarray*}
B_{a, b}(X) & := & (\i\otimes p_{a})X(\i\otimes p_{b}) \\
B_{a, b}(X^{d}) & := & (\i\otimes p_{a})X^{d}(\i\otimes p_{b})
\end{eqnarray*}
This is simply $X$ (resp. $X^{d}$) seen as an operator from $\B(H_{b})$ to $\B(H_{a})$ and obviously has norm less than $\|X\|$ (resp. $\|X^{d}\|$). We will call it a \emph{block}. The operator $X^{d}$ admits a particular decomposition with respect to these blocks.

\begin{lem}\label{lem:blockdecomposition}
Set $X^{d}_{j} = \displaystyle\sum_{k=0}^{+\infty}B_{d-j+k, j+k}(X^{d})$, then $X^{d} = \displaystyle\sum_{j=0}^{d}X^{d}_{j}$.
\end{lem}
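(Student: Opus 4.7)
The approach is a simple block decomposition of $X^d$ with respect to the central projections $p_a\in\ell^\infty(\h{\G})$, using the fusion rules of $O^+(F)$ to identify which blocks are non-zero. I would begin from the resolution of identity $1=\sum_a p_a$ in $\ell^\infty(\h{\G})$, which gives the \emph{a priori} decomposition
$$
X^d = \sum_{a,b\geqslant 0}(\i\otimes p_a)X^d(\i\otimes p_b) = \sum_{a,b\geqslant 0} B_{a,b}(X^d),
$$
understood as a strong limit of operators on $H\otimes L^2(O^+(F))$; convergence poses no issue since the $p_a$ are mutually orthogonal and any vector in $H\otimes \B(H_b)$ is mapped by $X^d$ into a sum over only finitely many components (as we verify next).

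The key point is to determine which blocks vanish. Since $X^d=(\i\otimes m_{p_d})(X)$ has coefficients in $\B(H)\odot\mathrm{span}\{u^d_{i,j}\}$, formula \eqref{eq:gns} shows that for $\xi\in p_bL^2(O^+(F))\simeq \B(H_b)$ and $\zeta\in H$, the vector $X^d(\zeta\otimes\xi)$ lies in $H\otimes \bigoplus_{\gamma\subset u^d\otimes u^b}p_\gamma L^2(O^+(F))$. Therefore $B_{a,b}(X^d)=0$ unless $u^a$ appears as a subrepresentation of $u^d\otimes u^b$. By Theorem~\ref{thm:freefusion}, $u^d\otimes u^b=\bigoplus_{j=0}^{\min(d,b)} u^{d+b-2j}$, so the surviving indices $(a,b)$ are exactly those satisfying
$$
|a-b|\leqslant d,\qquad a+b\geqslant d,\qquad a+b\equiv d\pmod 2.
$$

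It remains a combinatorial reindexing. The map $(j,k)\in\{0,\ldots,d\}\times\N \mapsto (a,b):=(d-j+k,\,j+k)$ is a bijection onto this set of admissible pairs, with inverse $j=(d+b-a)/2\in\{0,\ldots,d\}$ and $k=(a+b-d)/2\in\N$. Substituting and grouping the surviving terms by the value of $j$ yields
$$
X^d=\sum_{j=0}^d\sum_{k=0}^{+\infty}B_{d-j+k,\,j+k}(X^d)=\sum_{j=0}^d X^d_j,
$$
where on each subspace $H\otimes \B(H_b)$ the inner sum $\sum_k$ collapses to the single term $k=b-j$ (when $b\geqslant j$, and is empty otherwise), so $X^d_j$ is a well-defined bounded operator. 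The only point requiring care is checking that the $(j,k)$-parametrization enumerates precisely the admissible $(a,b)$-pairs without overcounting; beyond that there is no real obstacle, as the statement is essentially a bookkeeping consequence of the fusion rules.
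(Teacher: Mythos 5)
Your proof is correct and follows essentially the same route as the paper: decompose $X^d$ into blocks, use formula \eqref{eq:gns} together with the fusion rules of Theorem~\ref{thm:freefusion} to see that only the blocks $B_{d+b-2j,\,b}(X^d)$ with $0\leqslant j\leqslant\min(d,b)$ survive, and reindex via $b=j+k$. Your explicit characterization of the admissible pairs $(a,b)$ and verification of the bijection is just a more detailed write-up of the same bookkeeping.
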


\begin{proof}
Clearly, $X^{d} = \sum_{a, b}B_{a, b}(X^{d})$. If we decompose $X^{d}$ as $\sum_{i} T_{i}\otimes x_{i}$, with $x_{i}$ a coefficient of $u^{d}$ and $T_{i}\in \B(H)$, we see that $X^{d}$ sends $H\otimes (p_{b}\H)$ into $\oplus_{c} (H\otimes (p_{c}\H))$ where the sum runs over all irreducible subrepresentations $c$ of $d\otimes b$. Thus, we can deduce from Theorem \ref{thm:freefusion} that $B_{a, b}(X^{d})$ vanishes as soon as $a$ is not of the form $d+b-2j$ for some $0\leqslant j\leqslant \min(d, b)$. Consequently,
\begin{equation*}
X^{d} = \sum_{b=0}^{+\infty}\sum_{j=0}^{\min(d, b)}B_{d+b - 2j, b}(X^{d}) = \sum_{j=0}^{d}\sum_{b=j}^{+\infty}B_{d+b - 2j, b}(X^{d}).
\end{equation*}
Writing $b = k+j$, we get the desired result.
\end{proof}

This should be thought of as a decomposition according to the "number of deleted letters" in the action of $X^{d}$. Thanks to the triangle inequality, we can restrict ourselves to the study of $\|X^{d}_{j}\|$. Proposition \ref{prop:blocks} further reduces the problem to the study of only one specific block in $X^{d}_{j}$. Before stating and proving it, we have to introduce several notations and elementary facts.

Recall from Subsection \ref{subsec:gns} that for $\gamma\subset \alpha\otimes \beta$, $v_{\gamma}^{\alpha, \beta} : H_{\gamma} \mapsto H_{\alpha}\otimes H_{\beta}$ denotes an isometric intertwiner and let $M_{k}^{+} : \H \otimes \B(H_{k}) \rightarrow \H$ be the orthogonal sum of the operators $\Ad(v_{l+k}^{l, k})$. Under our identification of $\H$ with $\oplus \B(H_{k})$, the restriction of $M_{k}^{+}$ to $\B(H_{l})\otimes \B(H_{k})$ is just the map induced by the product composed with the orthogonal projection onto $\B(H_{l+k})$. If we endow $\B(H_{k})$ with the scalar product $\langle ., .\rangle_{k}$, it can be seen as a subspace of $\H$ and we can compute the norm of the restriction of $M_{k}^{+}$ to $\B(H_{l})\otimes \B(H_{k})$ with respect to the Hilbert structure on $\H \otimes \H$. Let $x\in \B(H_{l})\otimes \B(H_{k})$, then
\begin{eqnarray*}
\|M_{k}^{+}(x)\|^{2} & = & \frac{1}{D_{l+k}}\Tr(Q_{k+l}M_{k}^{+}(x)^{*}M_{k}^{+}(x)) \\
& = & \frac{1}{D_{l+k}}\Tr(Q_{k+l}(v^{l, k}_{l+k})^{*}x^{*}v^{l, k}_{l+k}(v^{l, k}_{l+k})^{*}x v^{l, k}_{l+k}) \\
& \leqslant & \frac{1}{D_{l+k}}\Tr(Q_{l+k}(v^{l, k}_{l+k})^{*}x^{*}x v^{l, k}_{l+k}) \\
& = & \frac{1}{D_{l+k}}\Tr(v^{l, k}_{l+k}Q_{l+k}(v^{l, k}_{l+k})^{*}x^{*}x) \\
& \leqslant & \frac{1}{D_{l+k}}(\Tr\otimes \Tr)((Q_{l}\otimes Q_{k})x^{*}x) \\
& = & \frac{D_{l}D_{k}}{D_{l+k}}\|x\|^{2}
\end{eqnarray*}
i.e. $\|M_{k}^{+}(p_{l}\otimes \i)\|^{2} = D_{l}D_{k}/D_{l+k}$ (the norm is attained at $x=v^{l, k}_{l+k}(v^{l, k}_{l+k})^{*}$).

\begin{rem}
Note that this computation also proves that $\|M_{k}^{+}\|^{2} = \displaystyle\frac{1-q^{2k+2}}{1-q^{2}}$ and in particular that $\|M_{1}^{+}\|^{2} = 1+q^{2}\leqslant 2$.
\end{rem}

\begin{rem}\label{rem:adjoint}
The computation of the adjoint of $M_{k}^{+}$ is similar to the computation of the norm. One has $(M_{k}^{+})^{*}p_{l+k} = (D_{l}D_{k}/D_{l+k})\Ad((v_{l+k}^{l, k})^{*})$.
\end{rem}

Let us now state and prove the main result of this section.

\begin{prop}\label{prop:blocks}
For integers $a, b$ and $c$, set
\begin{equation*}
N_{a, b}^{c} = 1 - \frac{D_{(a-b+c)/2}D_{(b-a+c)/2-1}}{D_{a+1}D_{b}}
\end{equation*}
whenever this expression makes sense. Then, if we set
\begin{equation*}
\chi_{j}^{d}(k) = \sqrt{\frac{D_{d-j}D_{j+k}}{D_{d-j+k}D_{j}}}\prod_{i=0}^{j-1}(N_{d-j+i, k+i}^{d-j+k})^{-1},
\end{equation*}
we have, for all $k$, $\|B_{d-j+k, j+k}(X^{d})\|\leqslant \chi_{j}^{d}(k)\|B_{d-j, j}(X^{d})\|$.
\end{prop}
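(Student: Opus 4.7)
My plan is to proceed by induction on $j$. The base case $j = 0$ is immediate: since $u^{d+k}$ is the unique top component of $u^{d} \otimes u^{k}$, the GNS formula (\ref{eq:gns}) together with the norm computation of $M_{k}^{+}(p_{d}\otimes \i)$ established just before this proposition will give
\[
B_{d+k, k}(X^{d})(\xi) = (\i_{H} \otimes M_{k}^{+})(B_{d, 0}(X^{d})(1) \otimes \xi)
\]
for $\xi \in \B(H_{k})$, yielding $\|B_{d+k,k}(X^{d})\| \leqslant \sqrt{D_{d} D_{k}/D_{d+k}}\, \|B_{d, 0}(X^{d})\| = \chi_{0}^{d}(k) \|B_{d, 0}(X^{d})\|$ (the product in $\chi_{0}^{d}(k)$ being empty).

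For general $j$, the key step is to lift each $\xi \in \B(H_{j+k})$ via the top-component isometry: setting $\widetilde{\xi} = v_{j+k}^{j, k}\xi (v_{j+k}^{j, k})^{*} \in \B(H_{j} \otimes H_{k})$ gives $\xi = M_{k}^{+}(\widetilde{\xi})$. Using (\ref{eq:gns}) one gets $B_{d-j+k, j+k}(X^{d})(\xi) = \phi'^{*}(x \otimes \widetilde{\xi})\phi'$ with $\phi' = (\i_{H_{d}} \otimes v_{j+k}^{j, k})v_{d-j+k}^{d, j+k}$, while the natural ``candidate'' quantity $M_{k}^{+}((B_{d-j, j}(X^{d}) \otimes \i)(\widetilde{\xi}))$ equals $\phi_{j}^{*}(x \otimes \widetilde{\xi})\phi_{j}$ with $\phi_{j} = (v_{d-j}^{d, j} \otimes \i_{H_{k}})v_{d-j+k}^{d-j, k}$. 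The two isometries $\phi', \phi_{j}$ lie in the $(\min(j, k) + 1)$-dimensional multiplicity space $\Mor(u^{d-j+k}, u^{d} \otimes u^{j} \otimes u^{k})$, which admits an orthogonal basis of isometries $\{\phi_{s}\}$ factoring through $u^{d+j-2s} \subset u^{d} \otimes u^{j}$. Expanding $\phi' = \sum_{s} c_{s}\phi_{s}$ splits $B_{d-j+k, j+k}(X^{d})(\xi)$ into a main part (the $s = j$ contribution, essentially $|c_{j}|^{2}$ times the candidate, which involves $B_{d-j, j}(X^{d})$) and error parts (the $s < j$ contributions) involving other blocks $B_{d+j-2s, j}(X^{d})$.

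The main technical task is to control these error parts. Applying the same lifting argument recursively, I expect to obtain a self-referential inequality of the form
\[
\|B_{d-j+k, j+k}(X^{d})\| \leqslant (\text{main}) + \varepsilon_{i} \|B_{d-j+k, j+k}(X^{d})\|
\]
where $\varepsilon_{i} = D_{d-j} D_{k-1}/(D_{d-j+i+1} D_{k+i}) = 1 - N_{d-j+i, k+i}^{d-j+k}$ at the $i$-th iteration; solving for $\|B_{d-j+k, j+k}(X^{d})\|$ produces one factor $(N_{d-j+i, k+i}^{d-j+k})^{-1}$, and iterating over $i = 0, \dots, j-1$ produces the full product in $\chi_{j}^{d}(k)$. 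The square-root prefactor $\sqrt{D_{d-j}D_{j+k}/(D_{d-j+k}D_{j})}$ emerges as the ratio $\|M_{k}^{+}(p_{d-j}\otimes \i)\|/\|M_{k}^{+}(p_{j}\otimes \i)\|$, tracking the normalization needed when passing between $\xi$, its lift $\widetilde{\xi}$, and the image through $B_{d-j, j}(X^{d})$. The hard part will be the combinatorial bookkeeping of the overlap coefficients $c_{s}$ (expressible via $6j$-symbol-type quantities in the fusion algebra of $O^{+}(F)$) and verifying that the resulting index shifts reproduce exactly the form of the $N$ factors at each iteration.
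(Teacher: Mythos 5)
There is a genuine gap: the heart of the proposition is the exact value of the constant $\chi_{j}^{d}(k)$, i.e.\ the product $\prod_{i}(N^{d-j+k}_{d-j+i,k+i})^{-1}$, and your argument never actually produces it. You set up an induction through an ``expected'' self-referential inequality whose error coefficient is asserted to be exactly $1-N^{d-j+k}_{d-j+i,k+i}$ at the $i$-th iteration, but you give no derivation of why $\|B_{d-j+k,j+k}(X^{d})\|$ should reappear on the right-hand side at all (your error terms involve the different blocks $B_{d+j-2s,j}(X^{d})$, not the block you are estimating), nor of why the coefficients would have precisely this form. You yourself flag the computation of the overlap coefficients $c_{s}$ and the matching of the $N$-factors as ``the hard part'' --- but that bookkeeping \emph{is} the content of the proposition, so what you have is a plan, not a proof.

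Moreover, the route through the $(\min(j,k)+1)$-dimensional space $\Mor(u^{d-j+k},u^{d}\otimes u^{j}\otimes u^{k})$ obscures the one observation that makes the statement clean: since $\widetilde{\xi}=v_{j+k}^{j,k}\xi(v_{j+k}^{j,k})^{*}$, every term in your expansion factors through $\i\otimes v_{j+k}^{j,k}$, so only the components of the $\phi_{s}$ lying in $(\i\otimes v_{j+k}^{j,k})\Mor(u^{d-j+k},u^{d}\otimes u^{j+k})$ survive, and this last space is \emph{one-dimensional}. In particular there are no genuine error terms and no recursion: $B_{d-j+k,j+k}(X^{d})$ is an exact scalar multiple of $(\i\otimes M_{k}^{+})(B_{d-j,j}(X^{d})\otimes\i)(\i\otimes M_{k}^{+})^{*}$. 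This is how the paper proceeds --- it compares $V=(\i\otimes v_{j+k}^{j,k})^{*}(v_{d-j}^{d,j}\otimes\i)v_{d-j+k}^{d-j,k}$ with $v_{d-j+k}^{d,j+k}$ inside the multiplicity-one space, writes $V=\mu_{j}^{d}(k)v_{d-j+k}^{d,j+k}$, and computes $\vert\mu_{j}^{d}(k)\vert^{2}=\prod_{i=0}^{j-1}N^{d-j+k}_{d-j+i,k+i}$ by evaluating norms of explicit morphisms via the formulas of Vergnioux (\cite[Prop.~2.3]{vergnioux2005orientation}, \cite[Lem.~4.8]{vergnioux2007property}); the square-root prefactor then comes from $\|M_{k}^{+}(p_{d-j}\otimes\i)\|$ and $\|(M_{k}^{+})^{*}p_{j+k}\|$. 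Your base case $j=0$ and the identification of the prefactor as a ratio of norms of restrictions of $M_{k}^{+}$ are fine, but to complete the proof you need either this multiplicity-one proportionality together with an actual computation of the scalar, or a worked-out substitute for it; as written, the inequality with the specific $\chi_{j}^{d}(k)$ is not established.
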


\begin{proof}
Let us first focus on the one-dimensional case. Let $x$ be a coefficient of $u^{d}$ seen as an element of $\B(H_{d})$ and choose an integer $k$. Let us compare the two operators
\begin{eqnarray*}
A & = & [M_{k}^{+}(p_{d-j}xp_{j}\otimes \i) (M_{k}^{+})^{*}](\xi) \\
B & = & (p_{d-j+k}xp_{j+k})(\xi)
\end{eqnarray*}
for $\xi\in p_{j+k}\H = \B(H_{j+k})$. Setting $V = (\i\otimes v_{j+k}^{j, k})^{*}(v_{d-j}^{d, j}\otimes \i)v_{d-j+k}^{d-j, k}$, we have an intertwiner between $u^{d-j+k}$ and $u^{d\otimes (j+k)}$. Since that inclusion has multiplicity one, there is a complex number $\mu_{j}^{d}(k)$ such that 
\begin{equation*}
V = \mu_{j}^{d}(k)v_{d-j+k}^{d, j+k}.
\end{equation*}
Now, using Equation (\ref{eq:gns}) and Remark \ref{rem:adjoint}, we have
\begin{eqnarray*}
B & = & (v_{d-j+k}^{d, j+k})^{*}(x\otimes \xi)v_{d-j+k}^{d, j+k} \\
A & = & V^{*}(x\otimes \xi)\left(\frac{D_{j}D_{k}}{D_{j+k}}\right)V
\end{eqnarray*}
and consequently $B = \lambda^{d}_{j}(k) A$, with $\lambda_{j}^{d}(k) = (D_{j}D_{k}/D_{j+k})^{-1}\vert \mu_{j}^{d}(k) \vert^{-2}$. Let us compute $\vert\mu_{j}^{d}(k)\vert$. If we set $v_{+}^{a, b}=(v_{a+b}^{a, b})^{*}$ and define two morphisms of representations
\begin{eqnarray*}
\mathcal{T}_{A} & = & (v_{+}^{d-j, j}\otimes v_{+}^{j, 0} \otimes \i_{k})(\i_{d-j}\otimes t_{j}\otimes \i_{k}) v_{d-j+k}^{d-j, k} \\
\mathcal{T}_{B} & = & (v_{+}^{d-j, j}\otimes v_{+}^{j, k})(\i_{d-j}\otimes t_{j}\otimes \i_{k}) v_{d-j+k}^{d-j, k}
\end{eqnarray*}
we have, up to some complex numbers of modulus $1$,
\begin{equation*}
\mathcal{T}_{A} = \|\mathcal{T}_{A}\|(v_{d-j}^{d, j}\otimes \i)v_{d-j+k}^{d-j, k}\text{ and }\mathcal{T}_{B} = \|\mathcal{T}_{B}\|v_{d-j+k}^{d, j+k}.
\end{equation*}
Since moreover $(\i\otimes v_{j+k}^{j, k})^{*}\mathcal{T}_{A} = \mathcal{T}_{B}$, we get $\vert \mu_{j}^{d}(k)\vert^{2}=\|\mathcal{T}_{B}\|^{2}/\|\mathcal{T}_{A}\|^{2}$. Thanks to \cite[Prop. 2.3]{vergnioux2005orientation} and \cite[Lem. 4.8]{vergnioux2007property}, we can compute the norms of $\mathcal{T}_{A}$ and $\mathcal{T}_{B}$ and obtain
\begin{equation*}
\vert \mu_{j}^{d}(k) \vert^{2} = \prod_{i=0}^{j-1}\frac{N^{d-j+k}_{d-j+i, k+i}}{N^{d-j}_{d-j+i, i}} = \prod_{i=0}^{j-1}N^{d-j+k}_{d-j+i, k+i}.
\end{equation*}
Note that for $j=0$, the above product is not defined. However, $\lambda_{0}^{d}(k)=1$ since $\mathcal{T}_{A} = \mathcal{T}_{B}$ in that case. As $\lambda_{j}^{d}(k)$ does not depend on $\xi$, we have indeed proved the following equality in $\B(\H)$ :
\begin{equation*}
p_{d-j+k}xp_{j+k} =  \lambda_{j}^{d}(k)[M_{k}^{+}(p_{d-j}xp_{j}\otimes \i)(M_{k}^{+})^{*}].
\end{equation*}

Now we go back to the operator-valued case. We have $X^{d} = \sum_{i} T_{i}\otimes x_{i}$, where $x_{i}\in \Pol(O^{+}(F))$ is a coefficient of $u^{d}$ and $T_{i}\in \B(H)$, hence
\begin{equation*}
\lambda_{j}^{d}(k) [(\i\otimes M_{k}^{+})(B_{d-j, j}(X^{d})\otimes \i)(\i\otimes M_{k}^{+})^{*}] = B_{d-j+k, j+k}(X^{d}).
\end{equation*}
Using the norms of the restrictions of $M_{k}^{+}$ computed above, we get
\begin{equation*}
\|B_{d-j+k, j+k}(X^{d})\| \leqslant \lambda_{j}^{d}(k) \|(\i\otimes M_{k}^{+})B_{d-j, j}(X^{d})(\i\otimes M_{k}^{+})^{*}\| \leqslant \chi_{j}^{d}(k)\|B_{d-j, j}(X^{d})\|.
\end{equation*}
\end{proof}

\begin{cor}\label{cor:blocks}
There is a constant $K(q)$, depending only on $q$, such that for any $d\in \N$ and $0\leqslant j \leqslant d$, $\|X_{j}^{d}\|\leqslant K(q)\|B_{d-j, j}(X^{d})\|$.
\end{cor}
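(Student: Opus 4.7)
The plan is to combine Lemma \ref{lem:blockdecomposition} with Proposition \ref{prop:blocks} and to exploit the fact that the summands in the decomposition $X_{j}^{d} = \sum_{k} B_{d-j+k, j+k}(X^{d})$ have pairwise orthogonal source and range projections. Concretely, the block $B_{d-j+k, j+k}(X^{d})$ maps $H\otimes p_{j+k}\H$ into $H\otimes p_{d-j+k}\H$, and for distinct values of $k$ both the source and target projections are orthogonal. Consequently the sum is an orthogonal direct sum of operators and
\begin{equation*}
\|X_{j}^{d}\| = \sup_{k\geqslant 0}\|B_{d-j+k, j+k}(X^{d})\| \leqslant \Bigl(\sup_{k\geqslant 0}\chi_{j}^{d}(k)\Bigr)\|B_{d-j, j}(X^{d})\|.
\end{equation*}
The whole problem is thus reduced to showing that $\chi_{j}^{d}(k)$ is bounded by a constant $K(q)$ depending only on $q$, uniformly in $d$, $j$ and $k$.

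To bound the square-root prefactor $\sqrt{D_{d-j}D_{j+k}/(D_{d-j+k}D_{j})}$, I would write each $D_{n}$ as $(1-q^{2n+2})/((1-q^{2})q^{n})$; the powers of $q$ in the four factors cancel exactly, leaving a ratio of terms of the form $(1-q^{2m})$ with $m\geqslant 1$. Since each such factor lies in $[1-q^{2},1)$, the whole prefactor is bounded above by $(1-q^{2})^{-1}$, independently of $d$, $j$ and $k$.

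For the product $\prod_{i=0}^{j-1}(N_{d-j+i, k+i}^{d-j+k})^{-1}$, I would first substitute $a=d-j+i$, $b=k+i$, $c=d-j+k$ into the formula defining $N$: then $(a-b+c)/2=d-j$ and $(b-a+c)/2-1=k-1$, so
\begin{equation*}
N_{d-j+i, k+i}^{d-j+k} = 1 - \frac{D_{d-j}D_{k-1}}{D_{d-j+i+1}D_{k+i}}.
\end{equation*}
Applying the inequality $D_{b}/D_{a}\leqslant q^{a-b}$ from Lemma \ref{lem:quantumdimension} to both fractions $D_{d-j}/D_{d-j+i+1}$ and $D_{k-1}/D_{k+i}$ gives the uniform lower bound $N_{d-j+i, k+i}^{d-j+k}\geqslant 1-q^{2(i+1)}$. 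Therefore
\begin{equation*}
\prod_{i=0}^{j-1}\bigl(N_{d-j+i, k+i}^{d-j+k}\bigr)^{-1} \leqslant \prod_{i=1}^{\infty}(1-q^{2i})^{-1},
\end{equation*}
an infinite $q$-Pochhammer product that converges since $q<1$. Setting $K(q) = (1-q^{2})^{-1}\prod_{i\geqslant 1}(1-q^{2i})^{-1}$ finishes the argument.

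The one subtlety — which I expect to be the only real point to check carefully — is the boundary case $k=0$ in the product, where $D_{k-1}=D_{-1}$ appears. Using the defining formula $D_{n}=(q^{n+1}-q^{-n-1})/(q-q^{-1})$ one has $D_{-1}=0$, so each factor $N_{d-j+i, i}^{d-j}$ equals $1$, consistently giving $\chi_{j}^{d}(0)=1$ as it should. Beyond this, the estimate is completely uniform and produces the desired constant $K(q)$ depending only on $q$.
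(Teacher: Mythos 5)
Your argument is correct and follows essentially the same route as the paper: bound each factor $N_{d-j+i,k+i}^{d-j+k}$ below by $1-q^{2i+2}$ via Lemma \ref{lem:quantumdimension} and dominate the prefactor by a constant depending only on $q$, so that $\chi_{j}^{d}(k)\leqslant K(q)$ uniformly. The only additions are cosmetic but welcome: you make explicit the orthogonality of the source and range projections of the blocks (which the paper's proof uses silently to pass from the blockwise bound to $\|X_{j}^{d}\|$) and you check the degenerate case $k=0$ via $D_{-1}=0$.
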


\begin{proof}
 According to Lemma \ref{lem:quantumdimension}, we have
\begin{equation*}
\frac{D_{d-j}D_{k-1}}{D_{d-j+i+1}D_{k+i}} \leqslant q^{i+1}q^{i+1} = q^{2i+2},
\end{equation*}
thus $(N_{d-j+i, k+i}^{d-j+k})^{-1}\leqslant (1-q^{2i+2})^{-1}$. Again by Lemma \ref{lem:quantumdimension}, $D_{d-j}/D_{d-j+k}\leqslant q^{k}$ and $D_{j+k}/D_{j}\leqslant D_{k}$, hence
\begin{equation*}
\chi_{j}^{d}(k)\leqslant \sqrt{q^{k}D_{k}}\prod_{i=0}^{j-1}\frac{1}{1-q^{2i+2}}\leqslant \frac{1}{\sqrt{1-q^{2}}}\prod_{i=0}^{+\infty}\frac{1}{1-q^{2i+2}} = K(q).
\end{equation*}
\end{proof}

As a summary of what has been worked out in this section, we can state an analogue of the Haagerup inequality for "operator-valued functions" on $\F O^{+}(F)$.

\begin{thm}
There exists a constant $K(q)$ depending only on $q$ such that
\begin{equation*}
\max_{0\leqslant j\leqslant d}\{\|B_{d-j, j}(X^{d})\|\} \leqslant \|X^{d}\| \leqslant K(q)(d+1)\max_{0\leqslant j\leqslant d}\{\|B_{d-j, j}(X^{d})\|\}
\end{equation*}
\end{thm}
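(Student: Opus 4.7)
The strategy is that essentially all of the difficulty has been packaged into Lemma \ref{lem:blockdecomposition}, Proposition \ref{prop:blocks}, and Corollary \ref{cor:blocks}; the theorem itself is just an assembly of these.

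First I would establish the lower bound. For any $0\leqslant j\leqslant d$, the block $B_{d-j,j}(X^{d}) = (\i\otimes p_{d-j})X^{d}(\i\otimes p_{j})$ is obtained by compressing $X^{d}$ by two orthogonal projections in $\B(\H)$, so $\|B_{d-j,j}(X^{d})\|\leqslant \|X^{d}\|$. Taking the maximum over $j$ yields the left-hand inequality.

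For the upper bound, I would apply Lemma \ref{lem:blockdecomposition}, which gives the decomposition $X^{d} = \sum_{j=0}^{d} X^{d}_{j}$. By the triangle inequality this yields
\begin{equation*}
\|X^{d}\| \leqslant \sum_{j=0}^{d} \|X^{d}_{j}\| \leqslant (d+1)\max_{0\leqslant j\leqslant d}\|X^{d}_{j}\|.
\end{equation*}
Now Corollary \ref{cor:blocks} controls each $\|X^{d}_{j}\|$ in terms of the single block $B_{d-j,j}(X^{d})$, namely $\|X^{d}_{j}\|\leqslant K(q)\|B_{d-j,j}(X^{d})\|$, with a constant $K(q)$ depending only on $q$. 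Substituting this bound gives
\begin{equation*}
\|X^{d}\| \leqslant K(q)(d+1)\max_{0\leqslant j\leqslant d}\|B_{d-j,j}(X^{d})\|,
\end{equation*}
which is exactly the right-hand inequality.

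There is no real obstacle at this stage, since the heart of the argument lies in Proposition \ref{prop:blocks} (identifying each block $B_{d-j+k,j+k}(X^{d})$ with $M_{k}^{+}$-conjugates of $B_{d-j,j}(X^{d})$ and computing the scalar $\mu_{j}^{d}(k)$) and in Corollary \ref{cor:blocks} (summing the block estimates through the norm bound of $M_{k}^{+}$ and the infinite product $\prod(1-q^{2i+2})^{-1}$). The only subtlety worth double-checking is that $X^{d}_{j}$ is a \emph{diagonal} sum with respect to the orthogonal decomposition $\H = \bigoplus_{k}\B(H_{k})$, so that $\|X^{d}_{j}\| = \sup_{k}\|B_{d-j+k,j+k}(X^{d})\|$ and the bound from Corollary \ref{cor:blocks} really does apply uniformly in $k$; this is already implicit in the proof of the corollary.
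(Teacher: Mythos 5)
Your proof is correct and follows exactly the paper's own argument: the lower bound from compression by projections, and the upper bound by combining the decomposition of Lemma \ref{lem:blockdecomposition} with the triangle inequality and the uniform block estimate of Corollary \ref{cor:blocks} (itself a consequence of Proposition \ref{prop:blocks}). Your side remark that $\|X^{d}_{j}\| = \sup_{k}\|B_{d-j+k,j+k}(X^{d})\|$ because the blocks act between pairwise orthogonal summands is a point the paper leaves implicit, and it is correctly identified.
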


\begin{proof}
The first inequality comes from the fact that $\|B_{d-j, j}(X^{d})\|\leqslant \|X^{d}\|$ and the second one from the triangle inequality combined with Proposition \ref{prop:blocks}.
\end{proof}

This inequality should be compared to A. Buchholz's inequality \cite[Thm 2.8]{buchholz1999norm} and to \cite[Eq 9.7.5]{pisier2003introduction} in the free group case.

\section{The completely bounded norm of projections}\label{subsec:recursion}

We now want to find some polynomial $P$ such that $\|X^{d}\|\leqslant P(d)\|X\|$. Thanks to Proposition \ref{prop:blocks}, the problem reduces to finding a polynomial $Q$ such that $\|B_{d-j, j}(X^{d})\|\leqslant Q(d)\|X\|$. This will be done using the following recursion formula.

\begin{prop}\label{prop:recursion}
Set
\begin{equation*}
N_{1}^{+} = \bigoplus_{l}\frac{D_{l+1}}{D_{1}D_{l}}M_{1}^{+}(p_{l}\otimes \i).
\end{equation*}
According to Remark \ref{rem:adjoint}, $(N_{1}^{+})^{*}$ is the sum of the operators $\Ad((v_{l+k}^{l, k})^{*})$. There are coefficients $C_{j}^{d}(s)$ such that for $0\leqslant j\leqslant d$,
\begin{eqnarray*}
B_{d-j+1, j+1}(X) & - & (\i\otimes M_{1}^{+})(B_{d-j, j}(X)\otimes \i)(\i\otimes N_{1}^{+})^{*} \\
= B_{d-j+1, j+1}(X^{d+2}) & + & \sum_{s=0}^{\min(j, d-j)}C_{j}^{d}(s)B_{d-j+1, j+1}(X^{d-2s})
\end{eqnarray*} 
\end{prop}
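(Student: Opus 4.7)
The strategy is to decompose $X = \sum_{m} X^{m}$, where $X^{m} = (\i \otimes m_{p_{m}})(X)$ gathers the coefficients of $u^{m}$. Since $X \in \B(H) \odot \Pol(O^{+}(F))$ is a finite sum, both sides of the desired identity are linear in $X$, so it suffices to compute $L_{m} := (\i \otimes M_{1}^{+})(B_{d-j, j}(X^{m}) \otimes \i)(\i \otimes N_{1}^{+})^{*}$ for each $m$ separately and then reassemble.

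For $X^{m} = T \otimes x$ with $x$ a coefficient of $u^{m}$, I will evaluate $L_{m}$ on $1 \otimes \eta$ with $\eta \in p_{j+1}\H \cong \B(H_{j+1})$. Three elementary formulas feed into the computation: by Remark \ref{rem:adjoint} and the normalisation in the definition of $N_{1}^{+}$, one has $(N_{1}^{+})^{*}\eta = v_{j+1}^{j, 1} \eta (v_{j+1}^{j, 1})^{*}$; the middle term acts on the $\H$-factor via Equation (\ref{eq:gns}), namely $p_{d-j} x p_{j}$ sends $A$ to $(v_{d-j}^{m, j})^{*}(x \otimes A) v_{d-j}^{m, j}$; and $M_{1}^{+}$ acts as $(v_{d-j+1}^{d-j, 1})^{*}(\cdot) v_{d-j+1}^{d-j, 1}$. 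Rearranging the composition of these three steps yields
\[ L_{m}(\eta) = W^{*}(x \otimes \eta) W, \quad W := (\i_{H_{m}} \otimes v_{j+1}^{j, 1})^{*}(v_{d-j}^{m, j} \otimes \i_{H_{1}}) v_{d-j+1}^{d-j, 1}, \]
viewed as a map $H_{d-j+1} \to H_{m} \otimes H_{j+1}$.

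Now $W$ is a composition of morphisms of representations, hence intertwines $u^{d-j+1}$ with $u^{m} \otimes u^{j+1}$. By the multiplicity-free fusion rules (Theorem \ref{thm:freefusion}) and Schur's lemma, $W$ is either zero or a scalar multiple of the canonical isometric intertwiner $v_{d-j+1}^{m, j+1}$; in both cases write $W = \mu_{m} v_{d-j+1}^{m, j+1}$ with $\mu_{m} \in \C$ (and $\mu_{m} = 0$ whenever $u^{d-j+1} \not\subset u^{m} \otimes u^{j+1}$). Applying Equation (\ref{eq:gns}) once more gives $L_{m} = |\mu_{m}|^{2} B_{d-j+1, j+1}(X^{m})$. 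Summing over $m$:
\[ B_{d-j+1, j+1}(X) - (\i \otimes M_{1}^{+})(B_{d-j, j}(X) \otimes \i)(\i \otimes N_{1}^{+})^{*} = \sum_{m}(1 - |\mu_{m}|^{2}) B_{d-j+1, j+1}(X^{m}). \]

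To match the claimed right-hand side, I enumerate the $m$ for which $B_{d-j+1, j+1}(X^{m})$ is nonzero: by Theorem \ref{thm:freefusion} these are exactly $m \in \{|d-2j|, |d-2j|+2, \dots, d, d+2\}$, namely $m = d+2$ and $m = d-2s$ for $s \in \{0, 1, \dots, \min(j, d-j)\}$. For $m = d+2$, a direct fusion check shows $u^{d-j} \not\subset u^{d+2} \otimes u^{j}$, so the factor $v_{d-j}^{d+2, j}$ inside $W$ vanishes, forcing $\mu_{d+2} = 0$ and producing the stand-alone term $B_{d-j+1, j+1}(X^{d+2})$ with coefficient $1$. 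For each $s \in \{0, 1, \dots, \min(j, d-j)\}$ one sets $C_{j}^{d}(s) := 1 - |\mu_{d-2s}|^{2}$, completing the identification. The principal technical obstacle is the clean bookkeeping that transforms the three-step composition into the expression $W^{*}(x \otimes \eta) W$; once this intertwiner form is obtained, everything reduces to Schur's lemma and the fusion-rule enumeration, with the explicit value of $|\mu_{m}|^{2}$ computable (should one need it) by the same techniques as in Proposition \ref{prop:blocks}, using the norm formulas of \cite{vergnioux2005orientation,vergnioux2007property}.
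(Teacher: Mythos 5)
Your argument is correct and follows essentially the same route as the paper: reduce to a single coefficient of $u^{m}$, rewrite the three-step composition as conjugation by the intertwiner $W=(\i\otimes v_{j+1}^{j,1})^{*}(v_{d-j}^{m,j}\otimes\i)v_{d-j+1}^{d-j,1}$, invoke the multiplicity-one fusion rules to write $W=\mu_{m}v_{d-j+1}^{m,j+1}$, and enumerate the admissible $m$ (with $\mu_{d+2}=0$ because $u^{d-j}\not\subset u^{d+2}\otimes u^{j}$). The only difference is that you defer the explicit computation of $\vert\mu_{m}\vert^{2}$ (which the paper carries out here via \cite[Prop. 2.3]{vergnioux2005orientation} and \cite[Lem. 4.8]{vergnioux2007property} because it is needed in Lemma \ref{lem:constantsum}), but this is not required for the existence statement being proved.
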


\begin{proof}
The idea of the proof is similar to the one used in the proof of Proposition \ref{prop:blocks}. We first consider the one-dimensional case. Let $x$ be a coefficient of $u^{l}$ seen as an element of $\B(H_{l})$. Fix an element $\xi\in p_{j+1}\H$. Again, the operators
\begin{eqnarray*}
A & = & [M_{1}^{+}(p_{d-j}xp_{j}\otimes \i)(N_{1}^{+})^{*}](\xi) \\
B & = & (p_{d-j+1}x_{l}p_{j+1})(\xi)
\end{eqnarray*}
are proportional. Note that if $l>d+2$, $l<\vert d-2j\vert$ or $l-d$ is not even, both operators are $0$. Note also that if $l=d+2$, $A = 0$. The other values of $l$ can be written $d-2s$ for some positive integer $s$ between $0$ and $\min(j, d-j)$. In that case, the existence of a scalar $\nu_{j}^{d}(s)$ such that $B = \nu^{d}_{j}(s) A$ follows from the same argument as in the proof of Proposition \ref{prop:blocks}. Let us compute $\nu^{d}_{j}(s)$, noticing that thanks to the normalization of $N_{1}^{+}$, the constant $\nu_{j}^{d}(s)$ only corresponds to the "$\mu$-part" of the constant $\lambda$ of Proposition \ref{prop:blocks}. This time we have to set
\begin{eqnarray*}
\mathcal{T}_{A} & = & (v_{+}^{d-s-j, j-s}\otimes v_{+}^{j-s, s}\otimes \i_{1})(\i_{d-j-s}\otimes t_{j-s}\otimes \i_{s+1}) v_{d-j+1}^{d-j-s, s+1} \\
\mathcal{T}_{B} & = & (v_{+}^{d-s-j, j-s}\otimes v_{+}^{j-s, s+1})(\i_{d-j-s}\otimes t_{j-s}\otimes \i_{s+1}) v_{d-j+1}^{d-j-s, s+1}
\end{eqnarray*}
Again, applying \cite[Prop. 2.3]{vergnioux2005orientation} and \cite[Lem. 4.8]{vergnioux2007property} yields
\begin{equation*}
\nu^{d}_{j}(s) = \frac{\|\mathcal{T}_{A}\|^{2}}{\|\mathcal{T}_{B}\|^{2}} = \prod_{i=0}^{j-s-1}\frac{N^{d-j}_{d-j-s+i, s+i}}{N^{d-j+1}_{d-j-s+i, s+i+1}}.
\end{equation*}

Like in the proof of Proposition \ref{prop:blocks}, we can now go back to the operator-valued case. We have
\begin{equation*}
X = \sum_{l}\sum_{i=0}^{k(l)} T_{l}^{(i)}\otimes x_{l}^{(i)}
\end{equation*}
where $x_{l}^{(i)}\in \Pol(O^{+}(F))$ are coefficients of $u^{l}$ and $T_{l}^{(i)}\in \B(H)$. Setting
\begin{equation*}
X^{l} = \sum_{i=0}^{k(l)}T_{l}^{(i)}\otimes x_{l}^{(i)},
\end{equation*}
we have
\begin{equation*}
B_{d-j+1, j+1}(X^{l}) = \nu^{d}_{j}(s) (\i\otimes M_{1}^{+})(B_{d-j, j}(X^{l})\otimes \i)(\i\otimes N_{1}^{+})^{*}
\end{equation*}
and setting $C_{j}^{d}(s) = 1 - \nu_{j}^{d}(s)^{-1}$ concludes the proof.
\end{proof}

The last result we need is a control on the coefficients $C_{j}^{d}(s)$ and $\chi^{d}_{j}(s)$.

\begin{lem}\label{lem:constantsum}
For any $0\leqslant j\leqslant d$, $\displaystyle\sum_{s=0}^{\min(j, d-j)}\vert C_{j}^{d}(s)\vert\chi^{d-2s}_{j-s}(s+1) \leqslant 1$.
\end{lem}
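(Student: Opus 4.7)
The plan is to prove this purely combinatorial inequality by direct computation using the explicit product formulas that emerge in the preceding proofs. From the proof of Proposition \ref{prop:recursion} one has $C_j^d(s) = 1 - \nu_j^d(s)^{-1}$ with
\[
\nu_j^d(s)^{-1} = \prod_{i=0}^{j-s-1}\frac{N^{d-j+1}_{d-j-s+i,\,s+i+1}}{N^{d-j}_{d-j-s+i,\,s+i}},
\]
and from the proof of Proposition \ref{prop:blocks} the constant $\chi_{j-s}^{d-2s}(s+1)$ is the product of the reciprocals of the same $N^{d-j+1}_{\cdot}$ multiplied by a square-root of a ratio of quantum dimensions. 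Unfolding the definition of $N^c_{a,b}$ and using the fusion rule $D_a D_b = \sum_{k=0}^{\min(a,b)} D_{a+b-2k}$ of Theorem \ref{thm:freefusion} (which in particular gives $D_a D_b - D_{a-1} D_{b-1} = D_{a+b}$), the $N$'s that appear can be reduced to the tractable forms
\[
N^{d-j}_{d-j-s+i,\,s+i} = 1 - \frac{D_{d-j-s}\,D_{s-1}}{D_{d-j-s+i+1}\,D_{s+i}}, \qquad N^{d-j+1}_{d-j-s+i,\,s+i+1} = 1 - \frac{D_{d-j-s}\,D_{s}}{D_{d-j-s+i+1}\,D_{s+i+1}}.
\]

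Next I would write the summand as $|A_s - B_s|$, where $A_s := \chi_{j-s}^{d-2s}(s+1)$ and $B_s := \nu_j^d(s)^{-1}\,\chi_{j-s}^{d-2s}(s+1)$. The sign of $C_j^d(s)$ needs to be checked to be consistent throughout the range: this follows from the monotonicity of the ratios $D_{b+n}/D_{a+n}$ in Lemma \ref{lem:quantumdimension}, which forces $\nu_j^d(s)^{-1} \leq 1$. The goal is then to reorganize $\sum_s (A_s - B_s)$ into a telescoping form in $s$, whereby most contributions cancel and only a boundary term survives, which one bounds by $1$ using $D_a/D_b \leq q^{a-b}$ from Lemma \ref{lem:quantumdimension}.

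If the clean telescoping identification proves too delicate, a fallback uses the elementary inequality $|1 - \prod_i y_i| \leq \sum_i (1 - y_i)$ valid for $0 \leq y_i \leq 1$. The hypothesis applies to the factors $y_i = N^{d-j+1}_{\cdot}/N^{d-j}_{\cdot}$ by the same monotonicity argument, and the identity $D_s D_{s+i} - D_{s-1} D_{s+i+1} = D_i$ (another instance of the fusion rule) collapses each $1 - y_i$ into a single ratio of the form $D_{d-j-s}\,D_i/(D_\cdot D_\cdot D_\cdot)$. The resulting double sum over $s$ and $i$ can then be controlled termwise via geometric estimates in $q$, using also $(N^{d-j}_{\cdot})^{-1} \leq (1-q^{2i+2})^{-1}$ as in the proof of Corollary \ref{cor:blocks}.

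The main obstacle will be identifying the correct telescoping pairing: the naive guess $B_s = A_{s-1}$ fails on the nose because the upper index of the $N$-factors (namely $d-j$ versus $d-j+1$) and the second subscript shift in inconsistent ways between consecutive summands. A careful bookkeeping of the product factors across successive values of $s$ is required to extract the hidden telescoping structure. Once this pairing is fixed, the remainder of the argument is a routine, if tedious, manipulation of ratios of quantum dimensions controlled by Lemma \ref{lem:quantumdimension}.
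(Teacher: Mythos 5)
There is a genuine gap: the proposal is a pair of strategies, neither of which is carried through, and the primary one is misdirected. No telescoping over $s$ is needed (or, as far as one can tell, available): the sum is controlled \emph{termwise}. The telescoping you are looking for happens inside the products over $i$, for each fixed $s$. Indeed, the fusion rule gives the closed forms $N^{d-j+1}_{d-j-s+i, s+i+1} = D_{i}D_{d-j+i+2}/(D_{d-s-j+i+1}D_{s+i+1})$ and $N^{d-j}_{d-s-j+i, s+i} = D_{i}D_{d-j+i+1}/(D_{d-s-j+i+1}D_{s+i})$, so that the product defining $\nu_{j}^{d}(s)$ collapses to $D_{j}D_{d-j+1}/(D_{s}D_{d-s+1})$ and hence $\vert C_{j}^{d}(s)\vert = D_{d-j-s}D_{j-s-1}/(D_{d-j+1}D_{j}) \leqslant q^{2s+2}$ by Lemma \ref{lem:quantumdimension}. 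A similar telescoping of the ratio $\chi_{j}^{d}(s+1)/\chi_{j}^{d}(s)$ gives $\chi_{j}^{d}(s+1)\leqslant (1-q^{2})^{-s-1}$ uniformly in $d$ and $j$. The sum is then dominated by the geometric series $\sum_{s\geqslant 0}\bigl(q^{2}/(1-q^{2})\bigr)^{s+1} = q^{2}/(1-2q^{2})$, which is at most $1$ exactly when $q\leqslant 3^{-1/2}$. You never invoke this standing hypothesis on $q$ (in force since $N>2$), and without a quantitative endgame of this kind the conclusion ``$\leqslant 1$'' cannot be reached; searching for a cancellation that leaves ``a boundary term bounded by $1$'' is not how the inequality closes.

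Your fallback is formally executable but, as outlined, does not suffice either. The inequality $1-\prod_{i} y_{i}\leqslant \sum_{i}(1-y_{i})$ together with $1-y_{i} = D_{d-j-s}D_{i}/(N^{d-j}_{\cdot}D_{d-j-s+i+1}D_{s+i}D_{s+i+1})$ and termwise estimates yields at best $\vert C_{j}^{d}(s)\vert\leqslant q^{2s+2}/(1-q^{2})$ (or worse, with the extra $(1-q^{2i+2})^{-1}$ factors), i.e.\ you lose a multiplicative constant compared with the exact value $q^{2s+2}$. Since the final bound $q^{2}/(1-2q^{2})$ equals $1$ on the nose at $q=3^{-1/2}$, any such loss pushes the sum strictly above $1$ there (one gets roughly $3/2$), so the fallback proves a strictly weaker statement than the one claimed in the range of $q$ for which Theorem \ref{thm:completelyboundedprojection} is asserted. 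To repair the argument you should compute $\vert C_{j}^{d}(s)\vert$ and the ratio $\chi_{j}^{d}(s+1)/\chi_{j}^{d}(s)$ exactly via the fusion-rule identities rather than estimate factor by factor.
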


\begin{proof}
We first give another expression of $\vert C_{j}^{d}(s)\vert$. Decomposing into sums of irreducible representations yields
\begin{eqnarray*}
D_{d-s-j+i+1}D_{s+i+1} - D_{d-s-j}D_{s} = D_{d-j+2} + \dots + D_{d-j+2i+2} = D_{i}D_{d-j+i+2} \\
D_{d-s-j+i+1}D_{s+i} - D_{d-s-j}D_{s-1} = D_{d-j+1} + \dots + D_{d-j+2i+1} = D_{i}D_{d-j+i+1}
\end{eqnarray*}
which implies that
\begin{equation*}
N^{d-j+1}_{d-j-s+i, s+i+1} = \frac{D_{i}D_{d-j+i+2}}{D_{d-s-j+i+1}D_{s+i+1}}\text{ and }N^{d-j}_{d-s-j+i, s+i} = \frac{D_{i}D_{d-j+i+1}}{D_{d-s-j+i+1}D_{s+i}}.
\end{equation*}
Hence
\begin{equation*}
\nu_{j}^{d}(s) = \prod_{i=0}^{j-s-1}\frac{N^{d-j}_{d-s-j+i, s+i}}{N^{d-j+1}_{d-j-s+i, s+i+1}} = \prod_{i=0}^{j-s-1}\frac{D_{d-j+i+1}D_{s+i+1}}{D_{s+i}D_{d-j+i+2}} = \frac{D_{j}D_{d-j+1}}{D_{s}D_{d-s+1}}.
\end{equation*}
Again, noticing that $D_{j}D_{d-j+1} - D_{s}D_{d-s+1} = D_{d-j-s}D_{j-s-1}$ yields
\begin{equation*}
\vert C_{j}^{d}(s)\vert = \vert 1 - \nu_{j}^{d}(s)^{-1}\vert = \frac{D_{d-j-s}D_{j-s-1}}{D_{d-j+1}D_{j}}.
\end{equation*}
According to Lemma \ref{lem:quantumdimension}, we thus have
\begin{equation*}
\vert C_{j}^{d}(s)\vert \leqslant q^{s+1}q^{s+1} = q^{2s+2}
\end{equation*}
Now we turn to $\chi_{j-s}^{d-2s}(s+1)$. In fact, we are going to bound $\chi_{j}^{d}(s+1)$ independantly of $d$ and $j$. Decomposing into sums of irreducible representations, we get
\begin{equation*}
D_{d-j+i+1}D_{k+i} - D_{d-j}D_{k-1} = D_{d-j+k+1} + \dots + D_{d-j+k+2i+1} = D_{i}D_{d-j+k+i+1},
\end{equation*}
which implies that $N_{d-j+i, k+i}^{d-j+k} = D_{i}D_{d-j+k+i+1}/D_{d-j+i+1}D_{k+i}$. Now we can compute
\begin{eqnarray*}
\frac{\chi_{j}^{d}(s+1)}{\chi_{j}^{d}(s)} & = & \sqrt{\frac{D_{j+s+1}D_{d-j+s}}{D_{j+s}D_{d-j+s+1}}}\prod_{i=0}^{j-1}\frac{D_{s+1+i}D_{d-j+s+i+1}}{D_{s+i}D_{d-j+s+i+2}} \\
& = & \sqrt{\frac{D_{j+s+1}D_{d-j+s}}{D_{j+s}D_{d-j+s+1}}}\frac{D_{j+s}D_{d-j+s+1}}{D_{s}D_{d+s+1}} \\
& = & \frac{\sqrt{D_{j+s}D_{d-j+s+1}D_{d-j+s}D_{j+s+1}}}{D_{s}D_{d+s+1}}.
\end{eqnarray*}
Using Lemma \ref{lem:quantumdimension} again, we get
\begin{equation*}
\frac{\chi_{j}^{d}(s+1)}{\chi_{j}^{d}(s)} \leqslant \sqrt{q^{j}D_{j}q^{d-j}D_{d-j}} \leqslant \frac{1}{1-q^{2}}.
\end{equation*}
Since $\chi_{j}^{d}(1) \leqslant (1-q^{2})^{-1}$, we have proved that $\chi_{j}^{d}(s+1)\leqslant (1-q^{2})^{-s-1}$. This bound is independant of $d$ and $j$, thus it also works for $\chi_{j-s}^{d-2s}(s+1)$. Combining this with our previous estimate we can compute
\begin{equation*}
\sum_{s=0}^{\min(j, d-j)}\vert C_{j}^{d}(s)\vert\chi_{j-s}^{d-2s}(s+1) \leqslant \sum_{s=0}^{+\infty}\left(\frac{q^{2}}{1-q^{2}}\right)^{s+1} = \frac{q^{2}}{1-2q^{2}}.
\end{equation*}
The last term is less than $1$ as soon as $q\leqslant 1/\sqrt{3}$, hence in particular for any $q$ such that $q+q^{-1}\geqslant 3$.
\end{proof}

Gathering all our results will now give the estimate we need. To make things more clear, we will proceed in two steps. First we bound the norms of the blocks of $X^{d}$.

\begin{prop}\label{prop:polynomialblocks}
There exists a polynomial $Q$ such that for any integer $d$ and $0\leqslant j\leqslant d$, $\|B_{d-j, j}(X^{d})\| \leqslant Q(d)\|X\|$.
\end{prop}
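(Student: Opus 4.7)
The plan is to prove, by induction on $l$, that the quantity
\[
R_l := \max_{0 \leqslant j \leqslant l} \|B_{l-j, j}(X^l)\|
\]
satisfies a linear bound $R_l \leqslant C(l+1)\|X\|$ for a suitable constant $C$ depending only on $q$. The proposition then follows with $Q(d) = C(d+1)$.

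For the inductive step from level $d$ to level $d+2$, I would apply Proposition \ref{prop:recursion} and solve for $B_{d-j+1, j+1}(X^{d+2})$, expressing it as three contributions: the two ``degree-one'' terms $B_{d-j+1, j+1}(X)$ and $-(\i\otimes M_{1}^{+})(B_{d-j, j}(X) \otimes \i)(\i \otimes N_{1}^{+})^{*}$, whose norms are each bounded by a constant multiple of $\|X\|$ thanks to the estimates $\|M_{1}^{+}\|^{2} \leqslant 1+q^{2}$ and $\|N_{1}^{+}\| \leqslant 1$ (the latter following from $D_{1}D_{l} = D_{l+1}+D_{l-1}$); and the recursive sum $-\sum_{s} C_{j}^{d}(s) B_{d-j+1, j+1}(X^{d-2s})$. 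The key move is to apply Proposition \ref{prop:blocks} to each summand in order to bring it back to a diagonal block at level $d-2s$, namely
\[
\|B_{d-j+1, j+1}(X^{d-2s})\| \leqslant \chi_{j-s}^{d-2s}(s+1)\,\|B_{d-j-s, j-s}(X^{d-2s})\| \leqslant \chi_{j-s}^{d-2s}(s+1)\,R_{d-2s}.
\]
Invoking Lemma \ref{lem:constantsum} together with the inductive hypothesis $R_{d-2s} \leqslant C(d-2s+1)\|X\| \leqslant C(d+1)\|X\|$ then bounds the whole sum by $C(d+1)\|X\|$. Choosing $C$ at least $1 + \|M_{1}^{+}\|\|N_{1}^{+}\|$ thus yields $R_{d+2} \leqslant C((d+2)+1)\|X\|$ for the interior blocks.

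The recursion does not produce the two corner blocks $B_{d+2, 0}(X^{d+2})$ and $B_{0, d+2}(X^{d+2})$, which I would handle separately using the fusion rules of Theorem \ref{thm:freefusion}: a coefficient of $u^{l}$ can contribute to $B_{d+2, 0}(X)$ only if $u^{d+2} \subset u^{l} \otimes u^{0} = u^{l}$, i.e.\ only for $l = d+2$, so $B_{d+2, 0}(X^{d+2}) = B_{d+2, 0}(X)$, whose norm is trivially at most $\|X\|$; the other corner is symmetric. The same fusion observation also covers the base cases $l \in \{0, 1\}$, at which every nonzero block of $X^{l}$ coincides with the corresponding block of $X$ and is therefore directly controlled by $\|X\|$.

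I do not anticipate a serious obstacle, since all the delicate input (the recursion of Proposition \ref{prop:recursion} together with the averaging estimate $\sum_{s}|C_{j}^{d}(s)|\chi_{j-s}^{d-2s}(s+1) \leqslant 1$ of Lemma \ref{lem:constantsum}) is already in place. The main care required is bookkeeping: aligning the index shifts between levels $d$, $d+2$ and $d-2s$, checking that the inductive hypothesis is invoked at strictly smaller levels (which it is, since $d - 2s \leqslant d < d+2$ for $s \geqslant 0$), and handling the fact that the step-$2$ recursion splits the induction into the two interlaced parity chains $l \in 2\N$ and $l \in 2\N + 1$.
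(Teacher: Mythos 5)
Your proposal is correct and follows essentially the same route as the paper: reduce to the interior blocks via the fusion-rule observation for the corners, apply the recursion of Proposition \ref{prop:recursion}, pull each term $B_{d-j+1,j+1}(X^{d-2s})$ back to a diagonal block with Proposition \ref{prop:blocks}, and close the induction (in steps of two) using Lemma \ref{lem:constantsum}. The only cosmetic difference is your choice of the linear bound $C(l+1)$ in place of the paper's $Q(l)=2l+1$, which changes nothing.
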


\begin{proof}
First note that $B_{d, 0}(X^{d}) = B_{d, 0}(X)$ and $B_{0, d}(X^{d}) = B_{0, d}(X)$, hence we only have to consider the case $1\leqslant j \leqslant d-1$. Moreover, applying the triangle inequality to the recursion relation of Proposition \ref{prop:recursion} yields
\begin{eqnarray*}
\|B_{d-j+1, j+1}(X^{d+2})\| & \leqslant & (1+\|M_{1}^{+}\|\|N_{1}^{+}\|)\|X\| \\
& + & \sum_{s=0}^{\min(j, d-j)}\vert C_{j}^{d}(s)\vert \|B_{d-j+1, j+1}(X^{d-2s})\|.
\end{eqnarray*}
We proceed by induction, with the following induction hypothesis

$H(d)$ : "For any integer $l \leqslant d$ and any $0\leqslant j\leqslant l$, $\|B_{l-j, j}(X^{l})\| \leqslant Q(l)\|X\|$ with $Q(X) = 2X+1$."

Because of the remark at the beginning of the proof, $H(0)$ and $H(1)$ are true. Knowing this, we just have to prove that for any $d$, $H(d)$ implies the inequality for $d+2$. Indeed, this will prove that assuming $H(d)$, both the inequalities for $d+1$ (noticing that $H(d)$ implies $H(d-1)$) and $d+2$ are true, hence $H(d+2)$ will hold.

Assume $H(d)$ to be true for some $d$ and apply the recursion formula above. The blocks in the right-hand side of the inequality are of the form $B_{d-j+1, j+1}(X^{d-2s})$. By Proposition \ref{prop:blocks} and $H(d)$,
\begin{eqnarray*}
\|B_{d-j+1, j+1}(X^{d-2s})\| & = & \|B_{(d-2s)-(j-s)+s+1, (j-s)+s+1}(X^{d-2s})\| \\
& \leqslant & \chi_{j-s}^{d-2s}(s+1)\|B_{(d-2s) - (j-s), (j-s)}(X^{d-2s})\| \\
& \leqslant & \chi_{j-s}^{d-2s}(s+1)Q(d-2s)\|X\|.
\end{eqnarray*}
Then, bounding $Q(d-2s)$ by $Q(d)$ and using Lemma \ref{lem:constantsum} yields
\begin{equation*}
\|B_{d-j+1, j+1}(X^{d+2})\| \leqslant 3\|X\| + Q(d)\|X\| \leqslant Q(d+2)\|X\|.
\end{equation*}
Since $\|B_{d-j+1, j+1}(X^{d+2})\| = \|B_{(d+2)-(j+1), j+1}(X^{d+2})\|$, the inequality is proved for $1\leqslant j+1\leqslant d+1$. In other words, we have $\|B_{d-J, J}(X^{d+2})\| \leqslant Q(d+2)\|X\|$ for any $1\leqslant J\leqslant d+1$. As noted at the beginning of the proof, this is enough to get $H(d+2)$.
\end{proof}

Secondly we bound the norm of $X^{d}$ itself.

\begin{thm}\label{thm:completelyboundedprojection}
Let $F\in GL_{N}(\C)$ be such that $F\overline{F}\in \R.\Id$ and let $0\leqslant q\leqslant 1$ be the real number defined in Theorem \ref{thm:freefusion}. Then, if $q\leqslant 3^{-1/2}$ (in particular if $N\geqslant 3$), there exists a polynomial $P$ such that for all integers $d$,
\begin{equation*}
\|m_{p_{d}}\|_{cb} \leqslant P(d).
\end{equation*}
\end{thm}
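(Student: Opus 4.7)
The proof is essentially a synthesis of the two major estimates already established: the operator-valued Haagerup inequality from Section \ref{subsec:blocks}, and the polynomial block bound of Proposition \ref{prop:polynomialblocks}. The plan is to show that both apply uniformly after amplifying by an arbitrary $\B(H)$, and then combine them.

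First I would recall that, by definition, the completely bounded norm of $m_{p_d}$ (viewed as a map on $C_{\text{red}}(O^{+}(F))$, or equivalently on $\B(L^{2}(\G))$ by the remark after Theorem \ref{theorem:quantumgilbert}) equals the supremum of $\|(\i_{\B(H)}\otimes m_{p_d})(X)\|$ over all Hilbert spaces $H$ and all contractions $X\in \B(H)\otimes C_{\text{red}}(O^{+}(F))$. Since $\Pol(O^{+}(F))$ is norm-dense in $C_{\text{red}}(O^{+}(F))$ and the map $\i\otimes m_{p_d}$ is defined algebraically on this dense subalgebra, it suffices to bound $\|X^{d}\| := \|(\i\otimes m_{p_d})(X)\|$ in terms of $\|X\|$ for $X\in \B(H)\odot \Pol(O^{+}(F))$, uniformly in $H$.

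For such an $X$, I would then apply the theorem at the end of Section \ref{subsec:blocks}: this yields
\begin{equation*}
\|X^{d}\| \leqslant K(q)(d+1)\max_{0\leqslant j\leqslant d}\|B_{d-j,j}(X^{d})\|.
\end{equation*}
Proposition \ref{prop:polynomialblocks} next gives a polynomial $Q$ with $\|B_{d-j,j}(X^{d})\|\leqslant Q(d)\|X\|$ for every $0\leqslant j\leqslant d$. Crucially, the constants in both estimates depend only on $q$, not on $H$ or on $X$, so the combined bound
\begin{equation*}
\|X^{d}\| \leqslant K(q)(d+1)Q(d)\|X\|
\end{equation*}
is genuinely uniform in the amplification. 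Taking the supremum over contractions $X$ on the left then gives $\|m_{p_d}\|_{cb}\leqslant P(d)$ with $P(d) = K(q)(d+1)Q(d)$, which is polynomial.

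The role of the hypothesis $q\leqslant 3^{-1/2}$ is entirely absorbed into the earlier machinery: it is the assumption that makes Lemma \ref{lem:constantsum} yield a summable geometric series bounded by $1$, and this in turn is what made the induction in Proposition \ref{prop:polynomialblocks} close with the linear polynomial $Q(X)=2X+1$. For the parenthetical remark ``in particular if $N\geqslant 3$'', I would briefly check that the assumption on $N$ forces $q+q^{-1}\geqslant N\geqslant 3$, which (with $0\leqslant q\leqslant 1$) gives $q\leqslant (3-\sqrt{5})/2 < 3^{-1/2}$. There is no genuine obstacle here beyond bookkeeping; the only subtle point worth underlining is the uniformity of the previous estimates under arbitrary $\B(H)$-amplification, which is built in since Propositions \ref{prop:blocks} and \ref{prop:recursion} were proved by factoring the operator-valued identities through intertwiners acting only on the quantum group leg.
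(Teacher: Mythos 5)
Your proposal is correct and follows essentially the same route as the paper: the paper likewise combines the block decomposition of Lemma \ref{lem:blockdecomposition}, the estimate $\|X^{d}_{j}\|\leqslant K(q)\|B_{d-j,j}(X^{d})\|$ of Corollary \ref{cor:blocks} (which is exactly the content of the Haagerup-type inequality you invoke), and Proposition \ref{prop:polynomialblocks}, to obtain $\|X^{d}\|\leqslant K(q)(d+1)Q(d)\|X\|$ and hence $P(d)=K(q)(d+1)Q(d)$. Your added remarks on uniformity over the amplifying space $H$ and on $N\geqslant 3$ forcing $q\leqslant (3-\sqrt{5})/2<3^{-1/2}$ are accurate and consistent with the paper's setup.
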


\begin{proof}
We use the notations of Proposition \ref{prop:polynomialblocks}. We know from Corollary \ref{cor:blocks} that $\|X^{d}_{j}\| \leqslant K(q)\|B_{d-j, j}(X^{d})\|$, thus $\|X^{d}_{j}\|\leqslant K(q)Q(d)\|X\|$. If we set $P(X) = K(q)(X+1)Q(X)$, we get $\|X^{d}\|\leqslant P(d)\|X\|$ by applying the triangle inequality to the decomposition of Lemma \ref{lem:blockdecomposition}.
\end{proof}

\begin{rem}
One could slightly improve this bound by noticing that since we can replace $Q(d)$ by $1$ when $j=0$ or $d$, $\|X^{d}\|\leqslant K(q)(2d^{2}-d+1)\|X\|$.
\end{rem}

\begin{rem}
When $q=1$, we get the usual compact group $SU(2)$. It was proved in \cite{vergnioux2007property} that this group (or rather its discrete quantum dual) has the Rapid Decay property, and that consequently $m_{p_{d}}$ grows at most polynomially. Since any bounded map from a C*-algebra into a \emph{commutative} C*-algebra is completely bounded with same norm (e.g. by \cite[Prop 34.6]{conway2000course}), Theorem \ref{thm:completelyboundedprojection} also works in that case. This of course suggests that it holds for $SU_{q}(2)$ for any value of $q$, though the majorizations of Lemma \ref{lem:constantsum} are not good enough to provide such a statement.
\end{rem}

It is proved in \cite[Thm 9.7.4]{pisier2003introduction} that in the free group case, the completely bounded norm of the projections on words of fixed length grows at most linearly. Our technique cannot determine whether such a result still holds in the quantum case but proves the slightly weaker fact that the growth is at most quadratic. However, we can prove that it is also at least linear. Let us first recall that the sequence $(\mu_{k})$ of (dilated) Chebyshev polynomials of the second kind is defined by $\mu_{0}(X) = 1$, $\mu_{1}(X) = X$ and
\begin{equation*}
X\mu_{k}(X) = \mu_{k-1}(X) + \mu_{k+1}(X)
\end{equation*}

\begin{prop}
Let $F\in GL_{N}(\C)$ be such that $F\overline{F}\in \R.\Id$. Then, there exists a polynomial $R$ of degree one such that
\begin{equation*}
\|m_{p_{d}}\|_{cb}\geqslant R(d).
\end{equation*}
\end{prop}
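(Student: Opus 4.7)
The plan is to bound $\|m_{p_{d}}\|_{cb}$ below by the operator norm of the restriction of $m_{p_{d}}$ to the commutative C*-subalgebra $B := C^{*}(\chi_{1}) \subseteq C_{\text{red}}(O^{+}(F))$ generated by the character $\chi_{1} = \sum_{i} u^{1}_{i,i}$ of the fundamental representation. Since $u^{1}\simeq \overline{u^{1}}$ for $O^{+}(F)$, the element $\chi_{1}$ is self-adjoint and $B$ is commutative. The fusion rule of Theorem \ref{thm:freefusion} gives the recursion $\chi_{1}\chi_{k} = \chi_{k-1} + \chi_{k+1}$, so $\chi_{k} = \mu_{k}(\chi_{1})$; combined with $h(u^{\alpha}_{i,j}) = 0$ for non-trivial $\alpha$, it also yields the orthonormality relations $h(\chi_{k}\chi_{l}) = \delta_{k, l}$. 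Consequently, the spectral measure $\nu$ of $\chi_{1}$ in the Haar state is the unique probability measure for which the $\mu_{k}$ are orthonormal, namely the semicircular distribution $d\nu(x) = (2\pi)^{-1}\sqrt{4-x^{2}}\, dx$ on $[-2, 2]$. Faithfulness of $h$ on $C_{\text{red}}(O^{+}(F))$ then forces $\sigma(\chi_{1}) = \mathrm{supp}(\nu) = [-2, 2]$, and Gelfand duality identifies $B$ with $C([-2, 2])$ with $\chi_{k}\leftrightarrow \mu_{k}$.

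Through this identification the restriction of $m_{p_{d}}$ to the dense subalgebra of polynomials in $\chi_{1}$ sends $\mu_{k}$ to $\delta_{k, d}\mu_{d}$, and extends by continuity to the rank-one operator
\[
T_{d} : C([-2, 2]) \longrightarrow C([-2, 2]), \qquad T_{d}(f) = \Bigl(\int_{-2}^{2} f(x)\mu_{d}(x)\, d\nu(x)\Bigr)\mu_{d}.
\]
Approximating $\mathrm{sgn}(\mu_{d})$ by continuous functions of supremum norm one gives $\|T_{d}\|_{\mathrm{op}} = (d+1)\|\mu_{d}\|_{L^{1}(\nu)}$, the factor $d+1$ coming from $\mu_{d}(2\cos\theta) = \sin((d+1)\theta)/\sin\theta$ and the limits at $\theta\to 0,\pi$. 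The substitution $x = 2\cos\theta$ then reduces the remaining integral to
\[
\|\mu_{d}\|_{L^{1}(\nu)} = \frac{2}{\pi}\int_{0}^{\pi}\bigl|\sin((d+1)\theta)\bigr|\sin\theta\, d\theta,
\]
and the elementary estimate $|\sin y|\geqslant \sin^{2}y$ combined with a short product-to-sum evaluation of $\int_{0}^{\pi}\cos(2(d+1)\theta)\sin\theta\, d\theta$ shows that this quantity is bounded below by $2/\pi$ for every $d$. Hence $\|m_{p_{d}}\|_{cb}\geqslant \|T_{d}\|_{\mathrm{op}}\geqslant (2/\pi)(d+1)$, which proves the claim with, e.g., $R(d) = (2/\pi)d$.

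The only delicate step is the identification of the spectral measure of $\chi_{1}$: it pulls together the $SU(2)$-type fusion rules, the self-adjointness coming from $u^{1}\simeq \overline{u^{1}}$, and the faithfulness of the Haar state on the reduced C*-algebra, and works uniformly in $F$. Once the semicircular picture is in hand, the rest is a standard $L^{1}$-norm estimate for Chebyshev polynomials of the second kind against the semicircle measure. Note that the argument in fact already delivers the same linear lower bound for the plain operator norm $\|m_{p_{d}}\|$, which is the natural analogue of the free group result.
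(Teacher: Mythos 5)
Your argument is correct, and it shares the paper's essential reduction -- everything is tested on the commutative character algebra $C^{*}(1,\chi_{1})\simeq C([-2,2])$, with $\chi_{k}\leftrightarrow\mu_{k}$ and the semicircular measure $\nu$, so that $\|m_{p_{d}}\|_{cb}\geqslant\|m_{p_{d}}\|$ is estimated on central elements only -- but the key estimate is different. The paper quotes Banica's identification and then uses a single test vector: it shows $\|\chi_{d+2}-\chi_{d}\|\leqslant 3$ by writing $\pi(\mu_{d+2}-\mu_{d})$ as a combination of three shift-type contractions on $L^{2}([-2,2],d\nu)$, and combines this with $\|\chi_{d}\|=\mu_{d}(2)=d+1$ to get $\|m_{p_{d}}\|\geqslant (d+1)/3$. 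You instead observe that $m_{p_{d}}$ restricted to $C^{*}(1,\chi_{1})$ is the rank-one operator $f\mapsto\bigl(\int f\mu_{d}\,d\nu\bigr)\mu_{d}$, whose norm is exactly $\|\mu_{d}\|_{\infty}\,\|\mu_{d}\|_{L^{1}(\nu)}=(d+1)\|\mu_{d}\|_{L^{1}(\nu)}$ (the duality step is fine: $\mu_{d}\,d\nu$ is absolutely continuous, so the functional's norm is the $L^{1}$-norm, approached by continuous approximations of $\mathrm{sgn}(\mu_{d})$), and your estimate $\int_{0}^{\pi}|\sin((d+1)\theta)|\sin\theta\,d\theta\geqslant\int_{0}^{\pi}\sin^{2}((d+1)\theta)\sin\theta\,d\theta=1+\bigl(4(d+1)^{2}-1\bigr)^{-1}\geqslant 1$ gives $\|\mu_{d}\|_{L^{1}(\nu)}\geqslant 2/\pi$. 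What each buys: your route yields a slightly better constant ($2(d+1)/\pi$ versus $(d+1)/3$) and in fact pins down the exact norm of the restricted projection, between $2(d+1)/\pi$ and $d+1$; the paper's test-vector argument is shorter once the isomorphism is quoted and avoids any measure-theoretic duality. You also reprove, rather than cite, the identification of the character algebra (fusion rules, orthonormality of characters under $h$, faithfulness of $h$ on the reduced C*-algebra forcing $\sigma(\chi_{1})=[-2,2]$); that sketch is correct and works uniformly in $F$, exactly as needed. Like the paper, your bound is really a bound on the plain multiplier norm, which suffices since $\|m_{p_{d}}\|_{cb}\geqslant\|m_{p_{d}}\|$.
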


\begin{proof}
Since $\|m_{p_{d}}\|_{cb} \geqslant \|m_{p_{d}}\|$, we will simply prove a lower bound for this second norm. Let $\chi_{n}\in \Pol(\G)$ be the character of the representation $u^{n}$, i.e.
\begin{equation*}
\chi_{n} = (\i\otimes \Tr)(u_{n}).
\end{equation*}
Our aim is to prove that looking at the action of $m_{p_{d}}$ on $\chi_{d+2} - \chi_{d}$ is enough to get the lower bound.

It is known (see \cite{banica1996theorie}) that sending $\chi_{n}$ to the restriction to $[-2, 2]$ of $\mu_{n}$ yields an isomorphism between the sub-C*-algebra of $C_{\text{red}}(O^{+}(F))$ generated by the elements $\chi_{n}$ and $C([-2, 2])$. Moreover, the restriction of these polynomials to the interval $[-2, 2]$ form a Hilbert basis with respect to the scalar product associated to the semicircular law
\begin{equation*}
d\nu = \frac{\sqrt{4-t^{2}}}{2\pi}dt.
\end{equation*}
Let us denote by $\pi :C([-2, 2]) \rightarrow \B(L^{2}([-2, 2], d\nu))$ the faithful representation by multiplication operators. What precedes means precisely that we have, for any finitely supported sequence $(a_{n})$,
\begin{equation*}
\left\|\sum_{n} a_{n}\chi_{n}\right\|_{C_{\text{red}}(O_{N}^{+})} = \left\|\sum_{n} a_{n}\mu_{n\vert[-2, 2]}\right\|_{\infty} = \left\|\sum_{n} a_{n}\pi(\mu_{n})\right\|_{\B(L^{2}([-2, 2], d\nu))}.
\end{equation*}
Let $e_{i}$ denote the image of $\mu_{i}$ in $L^{2}([-2, 2], d\nu)$ and denote by $T_{n}$ the operator sending $e_{i}$ to $e_{i+n}$ for $n\geqslant 0$. Letting $E_{j}$ denote the linear span of the vectors $e_{i}$ for $0\leqslant i\leqslant j$, we can also define operators $T_{-n}$ which are $0$ on $E_{n-1}$ and send $e_{i}$ to $e_{i-n}$ for $i\geqslant n$. The last operator we need, denoted $S_{n}$, sends $e_{i}\in E_{n}$ to $e_{n-i}$ and is $0$ on $E_{n}^{\perp}$. These translation operators obviously have norm $1$. Moreover, a simple computation using Theorem \ref{thm:freefusion} (or equivalently the recursion relation of the Chebyshev polynomials) shows that
\begin{equation*}
\pi(\mu_{n+2} - \mu_{n}) = T_{n+2} - S_{n} - T_{-(n+2)}.
\end{equation*}
Thus $\|\chi_{n+2} - \chi_{n}\| = \|\pi(\mu_{n+2} - \mu_{n})\|\leqslant 3$. On the other hand, it easily seen that $\mu_{n}(2) = n+1$. In fact, this is true for $\mu_{1}(X) = X$ and $\mu_{2}(X) = X^{2}-1$ and we have the recursion relation
\begin{equation*}
2\mu_{n}(2) = \mu_{n+1}(2) + \mu_{n-1}(2).
\end{equation*}
This implies that $\|\chi_{n}\| = \|\mu_{n}\|_{\infty} \geqslant n+1$. Combining these two facts yields
\begin{equation*}
\|m_{p_{d}}\|\geqslant \frac{\|m_{p_{d}}(\chi_{d+2} - \chi_{d})\|}{\|\chi_{d+2} - \chi_{d}\|} = \frac{\|-\chi_{d}\|}{\|\chi_{d+2} - \chi_{d}\|} \geqslant \frac{d+1}{3}
\end{equation*}
and setting $R(X) = (X+1)/3$ concludes the proof.
\end{proof}

\section{Monoidal equivalence and weak amenability}\label{subsec:final}

All the results proved so far hold in great generality, i.e. at least for any $\F O^{+}(F)$ with $F$ of size at least $3$ satisfying $F\overline{F}\in \R.\Id$. However, we will need in the proof of Theorem \ref{thm:maintheorem} a result of M. Brannan proving that some specific multipliers are completely positive. That assertion has up to now only been proved in the case $F = I_{N}$, hence our restriction in Theorem \ref{thm:maintheorem}. We will discuss this issue later on. Let us first deduce weak amenability of free quantum groups from the preceding sections.

\begin{thm}\label{thm:maintheorem}
Let $N\geqslant 2$ be an integer, then the discrete quantum groups $\F O_{N}^{+}$ and $\F U_{N}^{+}$ are weakly amenable and their Cowling-Haagerup constant is equal to $1$.
\end{thm}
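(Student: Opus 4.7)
The plan is to combine the polynomial estimate of Theorem \ref{thm:completelyboundedprojection} with M.~Brannan's completely positive radial multipliers (coming from his proof of the Haagerup property for $\F O_N^+$) in order to build a suitable approximating net, and then to transfer the resulting bound to $\F U_N^+$ using Theorem \ref{thm:freesubgroup}.

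First assume $N\geq 3$, so that Theorem \ref{thm:completelyboundedprojection} yields a polynomial $P$ with $\|m_{p_d}\|_{cb}\leq P(d)$ for every $d$. Brannan's work supplies, for each $t\in(0,1)$, an element
$$\phi_t \;:=\; \sum_{d=0}^{+\infty} t^{d}\, p_d \;\in\; \ell^{\infty}(\widehat{\F O_N^+})$$
such that the associated multiplier $m_{\phi_t}$ is unital and completely positive; in particular $\|m_{\phi_t}\|_{cb}=1$, and $\phi_t\to 1$ pointwise as $t\to 1$. I then truncate by setting
$$a_{t,n} \;:=\; \sum_{d=0}^{n} t^{d}\, p_d,$$
which has finite support. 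By the triangle inequality and Theorem \ref{thm:completelyboundedprojection},
$$\bigl\|m_{\phi_t} - m_{a_{t,n}}\bigr\|_{cb} \;\leq\; \sum_{d>n} t^{d}\,\|m_{p_d}\|_{cb} \;\leq\; \sum_{d>n} t^{d}\, P(d),$$
and the right-hand side tends to $0$ as $n\to\infty$ for each fixed $t<1$. For every such $t$ I choose $n(t)$ large enough that $\|m_{a_{t,n(t)}}\|_{cb}\leq 1+\epsilon(t)$ with $\epsilon(t)\to 0$ as $t\to 1$. The net $(a_{t,n(t)})_{t\to 1}$ then has finite support, converges pointwise to $1$, and satisfies $\limsup \|m_{a_{t,n(t)}}\|_{cb}\leq 1$, proving $\Lambda_{cb}(\widehat{\F O_N^+})\leq 1$; the reverse inequality is automatic. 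The case $N=2$ must be treated separately, but this is easy: $\F O_2^+$ is coamenable (its fusion ring is that of $SU(2)$ and its $L^{\infty}$-algebra is injective), hence its discrete dual $\widehat{\F O_2^+}$ is amenable and in particular weakly amenable with Cowling-Haagerup constant~$1$.

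Finally, for the free unitary case, Theorem \ref{thm:freesubgroup} exhibits $\widehat{\F U_N^+}$ as a discrete quantum subgroup of the free product $\Z \ast \widehat{\F O_N^+}$. Since $\Lambda_{cb}(\Z)=1$ and the previous step gives $\Lambda_{cb}(\widehat{\F O_N^+})=1$, stability of the Cowling-Haagerup constant~$1$ under free products (the quantum analogue of Ricard-Xu's theorem) yields $\Lambda_{cb}(\Z \ast \widehat{\F O_N^+})=1$; stability under passage to discrete quantum subgroups, obtained by restricting multipliers along the inclusion, then yields $\Lambda_{cb}(\widehat{\F U_N^+})=1$.

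The bulk of the work for this theorem has already been carried out in Sections \ref{subsec:blocks} and \ref{subsec:recursion}: once $\|m_{p_d}\|_{cb}$ is known to grow at most polynomially, the truncation of Brannan's ucp multipliers $\phi_t$ is essentially automatic. The main obstacle outside of that polynomial bound is the input from Brannan's construction, which is what forces the restriction to the case $F=\Id$ in the statement; the free product and quantum subgroup stability invoked to handle $\F U_N^+$ are technical but standard.
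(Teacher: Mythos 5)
Your proposal is correct and follows essentially the same route as the paper: truncate Brannan's completely positive radial multipliers, control the tail of the truncation with the polynomial bound of Theorem \ref{thm:completelyboundedprojection}, then pass to $\F U_{N}^{+}$ via free-product stability of $\Lambda_{cb}=1$ and Theorem \ref{thm:freesubgroup}, with $N=2$ handled by amenability. The one inaccuracy is the formula for the u.c.p.\ multipliers: what \cite{brannan2011approximation} actually provides is $a(t)=\sum_{d}\bigl(\mu_{d}(t)/\mu_{d}(N)\bigr)p_{d}$ for $t<N$, together with the estimate $0<\mu_{d}(t)/\mu_{d}(N)<K_{0}(t/N)^{d}$; the purely geometric element $\sum_{d}t^{d}p_{d}$ is not what Brannan proves to be completely positive, but this does not affect your argument, which runs verbatim with the correct coefficients since only complete positivity, pointwise convergence to $1$, and geometric decay of the coefficients are used.
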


\begin{proof}
For $N=2$, this result is already known by amenability of the discrete quantum group $\F O_{2}^{+} = \widehat{SU_{-1}(2)}$. Thus, we will assume $N>2$. We are going to use a net of elements in $\ell^{\infty}(\F O_{N}^{+})$ introduced by M. Brannan in \cite{brannan2011approximation} to prove the Haagerup property and the metric approximation property. For $t\in [0, N]$, set $b_{k}(t) = \mu_{k}(t)/\mu_{k}(N)$ and
\begin{equation*}
a_{i}(t) = \sum_{k=0}^{i} b_{k}(t)p_{k}\in \ell^{\infty}(\F O_{N}^{+}).
\end{equation*}
This is a net of finite rank elements converging pointwise to the identity and we now have to prove that the completely bounded norms of the associated multipliers satisfy the boundedness condition. If we fix some $2 < t_{0} < 3$, then \cite[Prop. 4.4]{brannan2011approximation} asserts the existence of a constant $K_{0}$, depending only on $t_{0}$, such that for any $t_{0}\leqslant t < N$, $0<b_{k}(t)<K_{0}(t/N)^{k}$. According again to \cite[Prop. 4.4]{brannan2011approximation}, the multipliers associated to the elements $a(t) = \sum_{k} b_{k}(t)p_{k}$ (where the sum runs from $0$ to infinity) are unital and completely positive. Moreover, for any $t_{0}\leqslant t < N$,
\begin{eqnarray*}
\|m_{a(t)} - m_{a_{i}(t)}\|_{cb} & \leqslant & \sum_{k > i} K_{0}\left(\frac{t}{N}\right)^{k}\|m_{p_{k}}\|_{cb}. \\
\end{eqnarray*}
This sum tends to $0$ as $i$ goes to infinity since Theorem \ref{thm:completelyboundedprojection} implies that it is the rest of an absolutely converging series. This implies that $\limsup \|m_{a_{i}(t)}\|_{cb} = 1$. In other words, $\Lambda_{cb}(\F O_{N}^{+}) = 1$. By \cite[Thm. 4.2]{freslon2012note}, we also have $\Lambda_{cb}(\Z\ast \F O_{N}^{+}) = 1$, hence $\Lambda_{cb}(\F U_{N}^{+}) = 1$ by Theorem \ref{thm:freesubgroup}.
\end{proof}

\begin{rem}
Let us point out that the above results, \cite[Thm. 4.2]{freslon2012note} and the isomorphisms of \cite[Prop 3.2]{weber2012classification} (see also \cite[Thm 4.1]{raum2012isomorphisms}) imply that the free bistochastic quantum groups $B_{N}^{+}$ and their symmetrized versions $(B_{N}^{+})'$ and $(B_{N}^{+})^{\sharp}$ have the Haagerup property and are weakly amenable with Cowling-Haagerup constant equal to $1$.
\end{rem}

Let us now explain how this technique can be extended to other families of discrete quantum groups. We will use the notion of monoidal equivalence and we refer the reader to \cite{bichon2006ergodic} for the relevant definitions and properties. We are thankful to S. Vaes for suggesting the following argument.

\begin{prop}\label{prop:monoidalequivalence}
Let $\G_{1}$ and $\G_{2}$ be two monoidally equivalent compact quantum groups. Then, if we index the equivalence classes of irreducible representations of both quantum groups by the same set through the monoidal equivalence, and if we denote by $p^{i}_{x}$ the projection in $\ell^{\infty}(\h{\G}_{i})$ corresponding to $x\in \Ir(\G_{i})$ for $i=1, 2$, we have
\begin{equation*}
\|m_{p^{1}_{x}}\|_{cb} = \|m_{p^{2}_{x}}\|_{cb}.
\end{equation*}
\end{prop}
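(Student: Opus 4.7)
The plan is to combine Daws' characterization (Theorem~\ref{theorem:quantumgilbert}) with the linking algebra construction for monoidal equivalence from \cite{bichon2006ergodic}. The underlying principle is that, since $p_{x}$ is a central element of $\ell^{\infty}(\h{\G}_{i})$, the multiplier $m_{p_{x}}$ only depends on intrinsic categorical data, namely intertwiner spaces and quantum dimensions, both of which are preserved under monoidal equivalence.

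Concretely, I would proceed as follows. By \cite{bichon2006ergodic}, the monoidal equivalence is realized by a unital C*-algebra $\B$ carrying commuting ergodic coactions of $\G_{1}$ from the left and $\G_{2}$ from the right. Its spectral decomposition $\B = \bigoplus_{\alpha} B_{\alpha}$ canonically matches the isomorphism classes $\Ir(\G_{1})$ and $\Ir(\G_{2})$, their intertwiner spaces, and their quantum dimensions; under this matching, $p^{1}_{x}$ and $p^{2}_{x}$ correspond. Theorem~\ref{theorem:quantumgilbert} then identifies $\|m_{p^{i}_{x}}\|_{cb}$ as the infimum of $\|\xi\|\|\eta\|$ over pairs $(\xi, \eta)$ satisfying
\begin{equation*}
(1\otimes \eta)^{*}(\h{W}_{i})_{12}^{*}(1\otimes \xi)(\h{W}_{i}) = p^{i}_{x}\otimes 1.
\end{equation*}
Given such a pair realizing the norm for $\G_{1}$, the plan is to transport it to a pair for $\G_{2}$ with the same product of norms by passing through $L^{2}(\B)$ (equipped with the GNS representation of the unique invariant state): this space carries compatible unitary implementations of both coactions, yielding intertwining data between $\h{W}_{1}$ and $\h{W}_{2}$ at the level of appropriate corners. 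Conjugating $(\xi_{1}, \eta_{1})$ by these unitaries, and enlarging the auxiliary Hilbert space $K$ to absorb the $L^{2}(\B)$-factor, produces the desired witness $(\xi_{2}, \eta_{2})$ for $\G_{2}$. The centrality of $p_{x}$ is what guarantees that the right-hand side of the Daws equation is correctly preserved under this transport, since non-central multipliers would involve data specific to the fiber functor rather than to the underlying tensor $*$-category.

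The main obstacle is implementing the transport rigorously: one must either explicitly construct the intertwining via $L^{2}(\B)$ and verify that it sends solutions of the Daws equation for $\G_{1}$ to solutions for $\G_{2}$ (with the same norms and with right-hand side exactly $p^{2}_{x}\otimes 1$), or else reformulate Daws' theorem in purely categorical language that is manifestly monoidal-equivalence invariant. Either way, once one inequality is secured, the reverse follows by symmetry and we conclude $\|m_{p^{1}_{x}}\|_{cb} = \|m_{p^{2}_{x}}\|_{cb}$.
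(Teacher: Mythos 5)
Your proposal correctly identifies the two essential ingredients — the linking algebra $\B$ of \cite{bichon2006ergodic} and the fact that centrality of $p_{x}$ is what makes the multiplier a ``categorical'' object — but the step that actually proves the inequality is left as an acknowledged obstacle, and that step is precisely where all the content lies. There is no unitary conjugating $\h{W}_{1}$ into $\h{W}_{2}$ (if there were, the quantum groups would be isomorphic, not merely monoidally equivalent); what the bi-Galois object provides is a pentagonal-type relation between $W_{1}$, $W_{2}$ and the unitaries implementing the two coactions on $L^{2}(\B)$. Extracting from this a map that sends a Daws witness $(\xi_{1},\eta_{1})$ for $\G_{1}$ to one for $\G_{2}$ with the same product of norms, and checking that the right-hand side of the equation comes out as exactly $p^{2}_{x}\otimes 1$ rather than some operator supported on the wrong corner, is a genuine construction that you have not carried out. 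As written, the argument is a plausible plan rather than a proof.

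The paper avoids this difficulty entirely by never descending to the level of Hilbert-space witnesses. It uses two injective (hence completely isometric) $*$-homomorphisms: the ergodic coaction $\alpha : C(\G_{1})\rightarrow C(\G_{1})\otimes \B$, which satisfies $(m_{p^{1}_{x}}\otimes\i)\circ\alpha = \alpha\circ Q_{x}$ where $Q_{x}$ is the projection of $\B$ onto the spectral subspace of $u^{x}$, and the embedding $\theta : C(\G_{2})\rightarrow \B\otimes \B^{op}$ from the proof of \cite[Thm 6.1]{vaes2007boundary}, which satisfies $(Q_{x}\otimes\i)\circ\theta = \theta\circ m_{p^{2}_{x}}$. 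Each intertwining relation immediately transfers completely bounded norms in one direction, giving $\|m_{p^{2}_{x}}\|_{cb}\leqslant \|Q_{x}\|_{cb}\leqslant \|m_{p^{1}_{x}}\|_{cb}$, and the reverse inequality follows by exchanging the roles of $\G_{1}$ and $\G_{2}$. I would suggest you replace the Daws-theorem transport by this algebra-level argument: it uses only the elementary fact that a completely isometric $*$-homomorphism intertwining two maps forces an inequality of their cb-norms, and it makes the role of centrality transparent (a non-central $a$ does not factor through the spectral projections $Q_{x}$ of the linking algebra).
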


\begin{proof}
Let $B$ be the linking algebra given by \cite[Thm 3.9]{bichon2006ergodic}. There is an action $\alpha : C(\G_{1})\rightarrow C(\G_{1})\otimes B$ such that
\begin{equation*}
(m_{p^{1}_{x}}\otimes \i)\circ \alpha = \alpha\circ Q_{x},
\end{equation*}
where $Q_{x}$ denotes the projection in $B$ onto the spectral subspace associated to the irreducible representation $u^{x}$. The injective $*$-homomorphism $\alpha$ being completely isometric, we deduce
\begin{equation*}
\|Q_{x}\|_{cb} \leqslant \|m_{p^{1}_{x}}\|_{cb}.
\end{equation*}
Now, we know from the proof of \cite[Thm 6.1]{vaes2007boundary} that there is an injective $*$-homomorphism $\theta : C(\G_{2})\rightarrow B\otimes B^{op}$ such that
\begin{equation*}
(Q_{x}\otimes \i)\circ\theta = \theta\circ m_{p^{2}_{x}},
\end{equation*}
yielding
\begin{equation*}
\|m_{p^{2}_{x}}\|_{cb} \leqslant \|Q_{x}\|_{cb}.
\end{equation*}
\end{proof}

Proposition \ref{prop:monoidalequivalence} means in particular that proving the polynomial bound in the case of $SU_{q}(2)$ (say at least for $q\leqslant 3^{-1/2}$) would give an alternative proof of Theorem \ref{thm:completelyboundedprojection}. However, it is not clear to us that such a computation would really be easier. We can now give a second class of examples of weakly amenable discrete quantum groups. We refer the reader to \cite{banica1999symmetries} and \cite{banica2002quantum} for the definition of quantum automorphism groups.

\begin{thm}
Let $B$ be a finite-dimensional C*-algebra with $\dim(B)\geqslant 6$ and let $\sigma$ be the $\delta$-trace on $B$. Then, the compact quantum automorphism group of $(B, \sigma)$ is weakly amenable and has Cowling-Haagerup constant equal to $1$.
\end{thm}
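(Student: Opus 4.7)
The approach is to transfer the technique of Theorem \ref{thm:maintheorem} to quantum automorphism groups via monoidal equivalence. By the work of Bichon--De Rijdt--Vaes \cite{bichon2006ergodic}, $\G = \text{Aut}^{+}(B,\sigma)$ equipped with its $\delta$-trace is monoidally equivalent to the projective version $PO^{+}(F)$ of $O^{+}(F)$, where $F\in GL_{2}(\C)$ satisfies $F\overline{F}\in \R.\Id$ and $\Tr(F^{*}F)=\delta$; under this equivalence, the irreducible representations of $\G$ correspond to the even-indexed irreducible representations of $O^{+}(F)$. The assumption $\dim(B)\geqslant 6$ forces $\delta \geqslant \sqrt{6} > 4/\sqrt{3}$, so that via $\delta = q+q^{-1}$ one gets $q\leqslant 3^{-1/2}$, placing us in the regime where Theorem \ref{thm:completelyboundedprojection} applies to $O^{+}(F)$.

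The plan then proceeds in two steps. First, combining Proposition \ref{prop:monoidalequivalence} with the inclusion $\Pol(PO^{+}(F))\subset \Pol(O^{+}(F))$ and Theorem \ref{thm:completelyboundedprojection}, one obtains for every $d\in \N$ the polynomial bound
\begin{equation*}
\|m_{p_{d}}^{\h{\G}}\|_{cb} = \|m_{p_{d}}^{\h{PO^{+}(F)}}\|_{cb}\leqslant \|m_{p_{2d}}^{\h{O^{+}(F)}}\|_{cb}\leqslant P(2d).
\end{equation*}
Second, one mimics the construction of Theorem \ref{thm:maintheorem} by defining, for $t_{0}<t<\delta$ with $t_{0}$ slightly above $2$,
\begin{equation*}
a(t) = \sum_{k=0}^{\infty} \frac{\mu_{k}(t)}{\mu_{k}(\delta)}\, p_{k} \in \ell^{\infty}(\h{\G}), \qquad a_{i}(t) = \sum_{k=0}^{i}\frac{\mu_{k}(t)}{\mu_{k}(\delta)}\, p_{k},
\end{equation*}
where $\mu_{k}$ denote the dilated Chebyshev polynomials of the second kind. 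Once $m_{a(t)}$ is known to be unital completely positive, the estimate $\mu_{k}(t)/\mu_{k}(\delta)\leqslant K_{0}(t/\delta)^{k}$ of \cite[Prop.~4.4]{brannan2011approximation} together with the polynomial bound above makes $\|m_{a(t)} - m_{a_{i}(t)}\|_{cb}\to 0$ as $i\to\infty$, and the diagonal argument of Theorem \ref{thm:maintheorem} yields $\Lambda_{cb}(\h{\G}) = 1$.

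The main obstacle is the complete positivity of $m_{a(t)}$ on $\h{\G}$, since Brannan's argument in \cite{brannan2011approximation} is formulated for $F = I_{N}$ and does not immediately transfer to the non-unitary matrix $F$ arising here. The resolution rests on a general transfer principle: unital \emph{central} completely positive multipliers are preserved by monoidal equivalence, as can be shown by extending the linking-algebra argument of Proposition \ref{prop:monoidalequivalence} from a single projection $p_{x}$ to the whole central element $a(t)$. The corresponding multipliers on $\G$ and on $PO^{+}(F)$ are then intertwined by injective unital $*$-homomorphisms and thus share the same unital CP status. Complete positivity on the $PO^{+}(F)$ side is in turn obtained by restricting Brannan's construction via the embedding $C(PO^{+}(F))\subset C(O^{+}(F))$, or equivalently by verifying it directly on $SO_{q}(3)$ through its description as the even part of $SU_{q}(2)$.
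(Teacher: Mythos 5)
Your overall architecture (monoidal equivalence of the quantum automorphism group of $(B,\sigma)$ with $SO_{q}(3)=PO^{+}(F)$ for $q+q^{-1}=\delta$, transfer of the polynomial bound $\|m_{p_{d}}\|_{cb}\leqslant P(2d)$ via Proposition \ref{prop:monoidalequivalence} and the inclusion $\Pol(SO_{q}(3))\subset\Pol(SU_{q}(2))$, the numerical check that $\dim(B)\geqslant 6$ forces $q\leqslant 3^{-1/2}$, and then a Brannan-type net of central multipliers) coincides with the paper's proof. The genuine gap is in your treatment of complete positivity. You propose to obtain the u.c.p.\ property of $m_{a(t)}$ on the $PO^{+}(F)$ side ``by restricting Brannan's construction via the embedding $C(PO^{+}(F))\subset C(O^{+}(F))$'', but the matrix $F\in GL_{2}(\C)$ arising here is not unitary (since $\delta=\sqrt{\dim(B)}>2$, the quantum group $SU_{q}(2)$ with $q<1$ is not of Kac type), and Brannan's complete positivity result \cite[Prop.~4.4]{brannan2011approximation} is only available for $F=I_{N}$. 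This is precisely the obstruction emphasized in the paper (it is the reason Theorem \ref{thm:maintheorem} is restricted to $F=\Id$ and the non-unimodular cases are left open), so there is nothing to ``restrict''; and your alternative, ``verifying it directly on $SO_{q}(3)$'', is exactly the hard missing ingredient, not a known fact. Your transfer principle for central u.c.p.\ multipliers through monoidal equivalence is correct (the paper proves it after Proposition \ref{prop:monoidalequivalencebis}), but you are using it in the wrong direction: positivity would have to be known first on the non-Kac side, which it is not.

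The paper sidesteps this entirely: since $\sigma$ is the $\delta$-trace, the quantum automorphism group of $(B,\sigma)$ is unimodular, and \cite[Thm~4.2]{brannan2012reduced} establishes \emph{directly on this quantum group} that the central elements $a(t)=\sum_{k}\mu_{2k}(\sqrt{t})\mu_{2k}(\sqrt{\dim(B)})^{-1}p_{k}$ define unital completely positive multipliers (note also that the correct coefficients are these, not $\mu_{k}(t)/\mu_{k}(\delta)$ as you wrote). Monoidal equivalence is then used only to import the completely bounded polynomial estimate on $m_{p_{d}}$, after which the convergence argument of Theorem \ref{thm:maintheorem} goes through verbatim. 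To repair your proposal, replace your positivity step by the citation of \cite[Thm~4.2]{brannan2012reduced} (or by an averaging argument using unimodularity of $\h{\G}$ together with its Haagerup property); the rest of your argument then matches the paper.
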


\begin{proof}
Let us first prove a more general statement. Consider the sub-C*-algebra $C(SO_{q}(3))$ of $C(SU_{q}(2))$ generated by the coefficients of $u^{\otimes 2}$, where $u$ denotes the fundamental representation of $SU_{q}(2)$. The restriction of the coproduct turns this C*-algebra into a compact quantum group, called $SO_{q}(3)$, which can be identified with the compact quantum automorphism group of $M_{2}(\C)$ with respect to a $(q+q^{-1})$-form (see \cite{soltan2010quantum}). Its irreducible representations can be identified with the even irreducible representations of $SU_{q}(2)$, and re-indexing them by $\N$ gives the $SO(3)$-fusion rules $u^{1}\otimes u^{n} = u^{n-1}\oplus u^{n}\oplus u^{n+1}$. Consequently, we have $\|m_{p_{d}}\|_{cb}\leqslant P(2d)$ as soon as $q\leqslant 3^{-1/2}$. We know from \cite[Thm 4.7]{de2010actions} that if $\sigma$ is any $\delta$-form on $B$, the compact quantum automorphism group of $(B, \sigma)$ is monoidally equivalent to $SO_{q}(3)$ if and only if $q+q^{-1} = \delta$. Thus, by Proposition \ref{prop:monoidalequivalence}, we have
\begin{equation*}
\|m_{p_{d}}\|_{cb}\leqslant P(2d)
\end{equation*}
for any integer $d$ as soon as $\delta$ is big enough. Direct computation shows that $\delta \geqslant \sqrt{6}$ is a sufficient condition.
If now $\sigma$ is the $\delta$-trace, $\delta = \sqrt{\dim(B)}$ and it was proved in \cite[Thm 4.2]{brannan2012reduced} that the elements
\begin{equation*}
a(t) = \sum_{k=0}^{+\infty} \frac{\mu_{2k}(\sqrt{t})}{\mu_{2k}(\sqrt{\dim(B)})}p_{k}
\end{equation*}
implement the Haagerup property. Applying the same proof as in Theorem \ref{thm:maintheorem} then yields weak amenability with Cowling-Haagerup constant $1$.
\end{proof}

\begin{rem}
A particular case of the previous theorem is the \emph{quantum permutation groups} $S_{N}^{+}$ (for $N\geqslant 6$) defined by S. Wang in \cite{wang1998quantum}. We can also deduce the Haagerup property and weak amenability with Cowling-Haagerup constant equal to $1$ for its symetrized version $(S_{N}^{+})'$.
\end{rem}

\begin{rem}
If $\dim(B)\leqslant 4$, the quantum automorphism group of $(B, \sigma)$ is amenable and is therefore weakly amenable with Cowling-Haagerup constant equal to $1$. Hence, the only case which is not covered by the previous theorem is the case of quantum automorphism groups of five-dimensional C*-algebras. There are two such C*-algebras, namely $\C^{5}$ and $M_{2}(\C)\oplus\C$. The quantum automorphism groups of these spaces are known to have the Haagerup property and we of course believe that they are weakly amenable with Cowling-Haagerup constant equal to $1$, though we do not have a proof of this fact.
\end{rem}

Let us further comment the consequences of Proposition \ref{prop:monoidalequivalence}. Let us say that an element $a\in \ell^{\infty}(\h{\G})$ is \emph{central} if it is of the form
\begin{equation*}
a = \sum_{\alpha\in \Ir(\G)} b_{\alpha}p_{\alpha}
\end{equation*}
where $b_{\alpha}\in \C$ (i.e. $a$ belongs to the centre of $\ell^{\infty}(\h{\G})$). Making linear combinations in the proof of Proposition \ref{prop:monoidalequivalence}, we see that if $\G_{1}$ and $\G_{2}$ are monoidally equivalent compact quantum groups, then any central element $a$ in $\h{\G}_{1}$ gives rise to a central element $a'$ in $\h{\G}_{2}$, the multipliers of which have the same completely bounded norm. Thus, weak amenability transfers through monoidal equivalence as soon as it can be implemented by central elements. Assume moreover that $m_{a}$ is u.c.p., then $m_{a'}$ is unital and has completely bounded norm $1$. Since any unital linear map of norm $1$ between two C*-algebras is positive (see e.g. \cite[Prop 33.9]{conway2000course}), we can conlude that $m_{a'}$ is also u.c.p. This gives us the first examples of non-amenable, non-unimodular discrete quantum groups having approximation properties.

\begin{prop}\label{prop:monoidalequivalencebis}
Let $\G$ be compact quantum group which is monoidally equivalent to $O^{+}_{N}$ or $U_{N}^{+}$ for some $N$ or to the compact quantum automorphism group of $(B, \sigma)$ for some finite-dimensional C*-algebra $B$ of dimension at least $6$ endowed with its $\delta$-trace $\sigma$, then $\h{\G}$ has the Haagerup property and is weakly amenable with Cowling-Haagerup constant equal to $1$.
\end{prop}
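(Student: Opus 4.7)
The plan is to carry out the strategy outlined in the discussion preceding the statement: transport Brannan's central approximating nets from the model cases across the monoidal equivalence, exploiting (a) that central multipliers are cb-isometric under monoidal equivalence, and (b) that a unital linear map between C*-algebras of cb norm $1$ is automatically completely positive. In each of the three model situations appearing in the hypothesis, Theorem \ref{thm:maintheorem} together with the cited work of Brannan produces a family of \emph{central} elements
\begin{equation*}
a(t) = \sum_{\alpha\in \Ir(\G_{0})} b_{\alpha}(t)\,p_{\alpha} \in \ell^{\infty}(\h{\G}_{0})
\end{equation*}
with finitely supported truncations $a_{i}(t)$, such that $m_{a(t)}$ is u.c.p., $a(t)\in c_{0}(\h{\G}_{0})$, $a(t)\to 1$ pointwise as $t$ approaches the endpoint, and $\|m_{a(t)}-m_{a_{i}(t)}\|_{cb}\to 0$ as $i\to\infty$.

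The first step is to upgrade Proposition \ref{prop:monoidalequivalence} from a single projection $p_{\alpha}$ to any finitely supported central element. The two $*$-homomorphisms $\alpha$ and $\theta$ appearing in that proof intertwine $m_{p^{1}_{x}}$, $Q_{x}$ and $m_{p^{2}_{x}}$ simultaneously, so taking the same finite linear combination on each side yields, for every finitely supported central $a = \sum b_{\alpha} p_{\alpha}$ and its image $a' = \sum b_{\alpha} p_{\alpha'}$ under the bijection $\Ir(\G_{0})\leftrightarrow \Ir(\G)$,
\begin{equation*}
\|m_{a'}\|_{cb} \;=\; \|m_{a}\|_{cb}.
\end{equation*}
Applied to differences $a_{i}(t)-a_{j}(t)$, this identity shows that the sequence $(m_{a'_{i}(t)})_{i}$ is Cauchy in cb norm, so it converges to a completely bounded multiplier $m_{a'(t)}$ on $\h{\G}$ with $\|m_{a'(t)}\|_{cb} = \|m_{a(t)}\|_{cb} = 1$.

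Now $m_{a'(t)}$ is unital and of cb norm $1$, hence completely positive by the fact quoted in the text, and therefore u.c.p. The $c_{0}$ decay and pointwise convergence to $1$ of $a'(t)$ follow immediately from those of $a(t)$, since they only involve finite-dimensional data for each irreducible class. This already gives the Haagerup property of $\h{\G}$. For weak amenability, the truncations $(a'_{i}(t))_{i}$ form a finitely supported net in $\ell^{\infty}(\h{\G})$ converging pointwise to $a'(t)$, with $\limsup_{i} \|m_{a'_{i}(t)}\|_{cb} = \|m_{a'(t)}\|_{cb} = 1$; a standard diagonal argument as $t$ approaches the endpoint then delivers $\Lambda_{cb}(\h{\G}) = 1$.

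The step I expect to require the most care is the cb-Cauchy argument on the transferred side, i.e.\ showing that the truncations $m_{a'_{i}(t)}$ really do converge to a multiplier associated to the full element $a'(t)$. This relies crucially on Theorem \ref{thm:completelyboundedprojection} on the \emph{model} side: the polynomial bound on $\|m_{p_{k}}\|_{cb}$ combined with the exponential decay of $b_{k}(t)$ is what makes the tail $\sum_{k>i} b_{k}(t)\,m_{p_{k}}$ vanish in cb norm, and it is only this cb-control --- not merely the weaker boundedness of the individual multipliers --- that lets monoidal equivalence transport the full approximation property. Everything else then follows by essentially formal manipulation of the identities furnished by Proposition \ref{prop:monoidalequivalence}.
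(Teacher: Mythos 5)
Your argument is correct and is essentially the paper's own: the paper proves this proposition exactly by extending Proposition \ref{prop:monoidalequivalence} by linearity to central elements, transporting Brannan's central u.c.p.\ nets across the monoidal equivalence, and recovering complete positivity of the transferred unital multipliers from the fact that a unital map of (cb) norm one is (completely) positive, with the cb-summability of the tails supplied by Theorem \ref{thm:completelyboundedprojection} on the model side. Your additional care with the cb-Cauchy argument for the truncations is a faithful elaboration of the same mechanism already used in the proof of Theorem \ref{thm:maintheorem}.
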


\begin{rem}
Using these arguments we can in fact recover \cite[Thm 4.2]{brannan2012reduced} directly from \cite[Thm 4.5]{brannan2011approximation}.
\end{rem}

\begin{rem}
Note that under the conditions of Proposition \ref{prop:monoidalequivalencebis}, the linking algebra $B$ giving the monoidal equivalence also has the Haagerup property relative to the unique invariant state and is weakly amenable with Cowling-Haagerup constant $1$.
\end{rem}

We end with some comments on the following very natural question : when is it possible to implement an approximation property by multipliers associated to central elements on a discrete quantum group ?

Let us make this question more formal. Let $A$ be an approximation property and say that a discrete quantum group has \emph{central A} if there are central multipliers implementing the property $A$. On the positive part of the problem, we have the following obvious facts :
\begin{enumerate}
\item If $\h{\G}$ has central $A$, then it has $A$ (the converse is true for any discrete group since any element is central).
\item If $\h{\G}_{1}$ has central $A$ and if $\G_{1}$ is monoidally equivalent to $\G_{2}$, then $\h{\G}_{2}$ has central $A$.
\item In the previous case, the linking algebra also has central $A$ (with respect to the projections on spectral subspaces).
\item If $\h{\G}$ has the Haagerup property and is unimodular, then it has the central Haagerup property (this is a consequence of M. Brannan's averaging technique used in the proof of \cite[Thm 3.7]{brannan2011approximation}).
\end{enumerate} 

On the negative part of the problem, we can make two remarks :
\begin{enumerate}
\item Amenability of $SU_{q}(2)$ (or $SO_{q}(3)$) cannot be implemented by central multipliers (whereas there is an obvious way to implement it with general multipliers), otherwise all the compact quantum groups $O^{+}(F)$ (or all the compact quantum automorphism groups of finite-dimensional C*-algebras with respect to some $\delta$-form) would be amenable.
\item If $q+q^{-1}$ is an integer, $SU_{q}(2)$ and $SO_{q}(3)$ have the central Haagerup property and are centrally weakly amenable with constant $1$. Hence these central approximation properties cannot be deduced from amenability.
\end{enumerate}

\bibliographystyle{amsplain}
\bibliography{../quantum}

\end{document}